\newcommand{\ignore}[1]{}
\newtheorem{assumption}{Assumption}
\newcommand{\oQ}{{\mathbf Q}}
\newcommand{\oR}{{\mathbb R}}
\newcommand{\vol}{\text{\rm vol}}
\newcommand{\oN}{{\mathbb N}}
\newcommand{\bK}{{\mathbf K}}
\journalname{}
\begin{document}

\title{Convergence analysis for Lasserre's measure--based hierarchy of upper bounds for polynomial optimization}


\author{Etienne de Klerk         \and
        Monique Laurent          \and
        Zhao Sun}

\institute{Etienne de Klerk \at
              Tilburg University \at
              PO Box 90153, 5000 LE Tilburg, The Netherlands \\
              \email{E.deKlerk@uvt.nl}
           \and
           Monique Laurent \at
              Centrum Wiskunde \& Informatica (CWI), Amsterdam and Tilburg University \at
              CWI, Postbus 94079, 1090 GB Amsterdam, The Netherlands\\
              \email{M.Laurent@cwi.nl}
           \and
           Zhao Sun \at
              \'Ecole Polytechnique de Montr\'eal \at
              GERAD--HEC Montreal 3000, C\^ote-Sainte-Catherine Rd, Montreal, QC H3T 2A7, Canada \\
              \email{Zhao.Sun@polymtl.ca}
}

\date{Received: date / Accepted: date}

\maketitle

\begin{abstract}
{
We consider the problem of minimizing  a continuous function $f$  over a compact set $\bK$.
We analyze a hierarchy of  upper bounds proposed by Lasserre in [{\em SIAM J. Optim.} $21(3)$ $(2011)$, pp. $864-885$],
 obtained by searching for an optimal probability density function $h$ on $\bK$ which is a sum of squares of polynomials,
 so that the expectation $\int_{\bK} f(x)h(x)dx$ is minimized.
We show that the rate of convergence is {no worse than} $O(1/\sqrt{r})$, where $2r$ is the degree bound on the density function. This analysis applies to the case when $f$ is Lipschitz continuous and   $\bK$ is a  full-dimensional
  compact set satisfying some boundary condition (which is satisfied, e.g.,  {for convex bodies}).
The $r$th upper bound in the hierarchy may be computed using semidefinite programming if $f$ is a polynomial of degree $d$,
 and if all moments of order up to $2r+d$ of the Lebesgue measure on $\bK$ are known, which holds, for example, if $\bK$ is a simplex,
  hypercube, or a Euclidean ball.
}

{
\keywords{Polynomial optimization \and Semidefinite optimization\and  Lasserre hierarchy}
\subclass{90C22\and 90C26 \and 90C30}
}
\end{abstract}

\newcommand{\fminK}{f_{\min,\mathbf{K}}}

\section{Introduction and Preliminaries}\label{secintro}

\subsection{Background}

We consider the problem of minimizing a continuous function $f:\oR^n\to\oR$ over a compact set $\mathbf{K}\subseteq \oR^n$. That is, we consider the problem of computing the parameter:

\begin{equation*}\label{fmink}
f_{\min,\mathbf{K}}:= \min_{x\in \mathbf{K}}f(x).
\end{equation*}

%
  Our main interest will be in the case where $f$ is a polynomial, and $\bK$ is defined by polynomial inequalities and equations.
    For such problems, active research has been done in recent years to construct tractable hierarchies of (upper and lower) bounds
    for $\fminK$, based on using sums of squares of polynomials and semidefinite programming (SDP).
The starting point is to reformulate $\fminK$ as the problem of finding the largest scalar $\lambda$ for which the polynomial $f-\lambda$ is nonnegative
 over $\mathbf{K}$ and then to replace the hard positivity condition by a suitable sum of squares decomposition. Alternatively, one may reformulate $\fminK$ as the
  problem of finding a probability measure $\mu$ on $K$ minimizing the integral $\int_{\mathbf{K}} fd\mu$.
These two dual points of view form the basis of the approach developed by Lasserre \cite{Las01} for building hierarchies of semidefinite programming based
 lower bounds for $\fminK$ (see  also \cite{Las09,ML09} for an overview).
 Asymptotic convergence to $\fminK$ holds (under some mild conditions on the set $\bK$).
 Moreover, error estimates have been shown in \cite{Sch,NS} when $\mathbf{K}$ is a general basic closed semi-algebraic set,  and in \cite{KL10,KLP06,KLS13,KLS14,DW,Fay,SZ14}
  for  simpler sets like the standard simplex, the hypercube and the unit sphere. In particular, \cite{Sch} shows that the rate of convergence of the hierarchy
   of lower bounds based on Schm\"udgen's Positivstellensatz is in the order $O(1/\sqrt[c]{2r})$, while
  \cite{NS} shows a convergence rate   in $O(1/\sqrt[c']{\log(2r/c') })$  for the (weaker) hierarchy of bounds based on Putinar's Positivstellensatz. Here,
   $c,c'$ are constants {(not explicitly known)} depending only on $\bK$, and $2r$ is the selected degree bound. For the case of the hypercube,  \cite{KL10} shows  (using Bernstein approximations) a convergence
    rate in $O(1/r)$ { for the lower bounds based on Schm\"udgen's Positivstellensatz}.

On the other hand, by selecting suitable probability measures on $\bK$, one obtains upper bounds for $\fminK$. This approach has been investigated,
 in particular, for minimization over  the standard simplex and  when selecting some discrete distributions over the grid points in the simplex.
The multinomial distribution is used in \cite{Nes,KLS13} to show convergence in $O(1/r)$  and  the multivariate hypergeometric distribution is used
  in  \cite{KLS14} to show convergence in $O(1/r^2)$ for quadratic minimization over the simplex (and in the general case assuming a rational minimizer exists).

Additionnally, Lasserre \cite{Las11} shows that, if we fix any measure $\mu$ on $\bK$,  then it suffices to search for a polynomial density function $h$
 which is a sum of squares  and minimizes the integral $\int_{\mathbf{K}} fhd\mu$ in order to compute the minimum $\fminK$ over $\bK$ (see Theorem \ref{thmlasnn} below).
  By adding degree constraints on the polynomial density $h$ we get a hierarchy of upper bounds for $\fminK$ and our main objective in this paper is to analyze
  the quality of this hierarchy of upper bounds for $\fminK$.
Next we will recall this result of Lasserre \cite{Las11} and then we describe our main results.

\subsection{Lasserre's hierarchy of upper bounds}
Throughout,  $\oR[x]=\oR[x_1,\dots,x_n]$ is the set of polynomials in $n$ variables with real coefficients, and $\oR[x]_r$ is the set of polynomials with degree at most $r$. $\Sigma[x]$ is the set of sums of squares of polynomials, and $\Sigma[x]_r=\Sigma[x]\cap \oR[x]_{2r}$ consists of all  sums of squares of polynomials with degree at most $2r$.
We now recall the  result of Lasserre \cite{Las11}, which  is based on the following characterization for  nonnegative continuous functions on a compact set $\mathbf K$.

\begin{theorem}\label{thmlasnn}\cite[Theorem 3.2]{Las11}
Let $\mathbf{K}\subseteq \oR^n$ be compact, let $\mu$ be an arbitrary finite Borel measure supported by $\mathbf{K}$, 
 and let $f$ be a continuous function on $\oR^n$. Then, $f$ is nonnegative on $\mathbf{K}$ if and only if
\begin{equation*}
\int_{\mathbf{K}}g^2fd\mu\ge0 \ \ \forall  g\in\oR[x].
\end{equation*}
Therefore, the minimum of $f$ over $\mathbf{K}$ can be expressed as
\begin{equation}\label{formulamu}
f_{\min,\mathbf{K}}=\inf_{h\in\Sigma[x]}\int_{\mathbf{K}}hfd\mu \ \ \text{s.t. $\int_{\mathbf{K}}hd\mu=1$.}
\end{equation}
\end{theorem}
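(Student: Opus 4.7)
The plan is to establish the two assertions in turn: first the pointwise characterization of nonnegativity, and then derive the integral formula for $\fminK$ from it. The forward implication of the characterization is immediate, since if $f \ge 0$ on $\bK$ and $g \in \oR[x]$, the integrand $g^2 f$ is nonnegative on $\bK = \mathrm{supp}(\mu)$, so $\int_{\bK} g^2 f\,d\mu \ge 0$.

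For the converse, I would argue by contradiction. Suppose $f(x_0) < 0$ for some $x_0 \in \bK$. By continuity there is $\epsilon > 0$ such that $f(x) \le f(x_0)/2$ on $B_\epsilon := \overline{B(x_0,\epsilon)} \cap \bK$, and because $\mu$ has full support on $\bK$ we have $\mu(B_{\epsilon/2}) > 0$. The strategy is to exhibit a polynomial $g$ whose square $g^2$ is sharply localized near $x_0$. First, using Urysohn's lemma (or directly via distance functions), choose a continuous cutoff $\phi \colon \bK \to [0,1]$ equal to $1$ on $B_{\epsilon/2}$ and to $0$ off $B_\epsilon$; then
\[
\int_{\bK} \phi^2 f\,d\mu \ \le\ (f(x_0)/2)\,\mu(B_{\epsilon/2})\ =:\ -\delta\ <\ 0.
\]
Next, by the Stone--Weierstrass theorem, approximate $\phi$ by a polynomial $g$ uniformly on $\bK$ within some small $\eta>0$. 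Since $|g^2-\phi^2| = |g-\phi|\cdot|g+\phi| \le \eta(2+\eta)$ on $\bK$, one obtains $\bigl|\int_{\bK}(g^2-\phi^2)f\,d\mu\bigr| \le \eta(2+\eta)\|f\|_{\infty}\,\mu(\bK)$. Choosing $\eta$ sufficiently small then yields $\int_{\bK} g^2 f\,d\mu < -\delta/2 < 0$, contradicting the hypothesis. This localization step is the main technical hurdle; everything else is bookkeeping.

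Granted the characterization, the integral formula follows easily. For the inequality $\inf \ge \fminK$, any feasible $h \in \Sigma[x]$ with $\int_{\bK} h\,d\mu = 1$ makes $h\,d\mu$ a probability measure on $\bK$, and the pointwise bound $h\,(f-\fminK)\ge 0$ on $\bK$ gives $\int_{\bK} h f\,d\mu \ge \fminK$. For the reverse inequality, fix any $\lambda > \fminK$; then $f-\lambda$ takes negative values on $\bK$, so by the characterization there exists $g \in \oR[x]$ with $\int_{\bK} g^2(f-\lambda)\,d\mu < 0$, which in particular forces $\int_{\bK} g^2\,d\mu > 0$. Rescaling, $h := g^2/\int_{\bK} g^2\,d\mu$ is a feasible sum of squares density with $\int_{\bK} h f\,d\mu < \lambda$, so the infimum is at most $\lambda$. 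Letting $\lambda \downarrow \fminK$ completes the proof.
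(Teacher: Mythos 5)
Your argument is correct. Be aware, though, that the paper does not actually prove the first assertion (the characterization of nonnegativity on $\bK$ via $\int_{\bK}g^2f\,d\mu\ge 0$): it is imported verbatim from Lasserre, and the paper's own proof consists solely of deriving (\ref{formulamu}) from it, by writing $\fminK=\sup\{\lambda: f-\lambda\ge 0 \text{ on } \bK\}$, replacing the positivity constraint by the integral conditions via the cited characterization, and then normalizing $\int_{\bK}h\,d\mu=1$. Your derivation of (\ref{formulamu}) is in substance the same argument, just organized as two separate inequalities instead of a rewriting of the supremum; your explicit remark that $\int_{\bK}g^2(f-\lambda)\,d\mu<0$ forces $\int_{\bK}g^2\,d\mu>0$ (so the normalization is legitimate) is a detail the paper glosses over. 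What you genuinely add is a self-contained proof of the cited characterization: localizing at a point where $f<0$ with a Urysohn cutoff $\phi$, approximating $\phi$ by a polynomial $g$ via Stone--Weierstrass, and controlling the error with $|g^2-\phi^2|\le\eta(2+\eta)$ together with the finiteness of $\mu$ and the boundedness of $f$ on the compact set $\bK$. This is a standard and correct argument; it correctly uses that ``supported by $\bK$'' must mean $\mathrm{supp}(\mu)=\bK$ (full support is genuinely needed for the ``if'' direction, since otherwise $f$ could be negative on a $\mu$-null part of $\bK$). In short, you prove strictly more than the paper does, and the overlapping part is handled essentially identically.
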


\smallskip
\noindent
Note that formula (\ref{formulamu}) does not appear explicitly in \cite[Theorem 3.2]{Las11}, but one can derive it easily from it.
Indeed, one can write $f_{\min,\mathbf{K}}=\sup \left\{\lambda : f(x)-\lambda\ge0 \ \ \text{over $\mathbf{K}$} \right\}$. Then, by the first part of Theorem \ref{thmlasnn}, 
we have $f_{\min,\mathbf{K}}=\sup \left\{\lambda : \int_{\mathbf{K}}h(f-\lambda) d\mu\ge0\  \forall \text{$h\in\Sigma[x]$}\right\}$. As $\int_{\mathbf{K}}h(f-\lambda) d\mu=\int_{\mathbf{K}} hf  d\mu-\lambda\int_{\mathbf{K}}hd\mu$, after normalizing $\int_{\mathbf{K}}hd\mu=1$, we can conclude (\ref{formulamu}).

\smallskip\noindent
If we select the measure $\mu$ to be the Lebesgue measure in Theorem \ref{thmlasnn}, then we  obtain the following reformulation for $f_{\min,\mathbf{K}}$, which we will consider in this paper:

\begin{equation*}\label{fminkreform2}
f_{\min,\mathbf{K}}=\inf_{h\in\Sigma[x]}\int_{\mathbf{K}}h(x)f(x)dx \ \ \text{s.t. $\int_{\mathbf{K}}h(x)dx=1$.}
\end{equation*}

\smallskip
\noindent
By bounding the degree of the polynomial $h\in \Sigma[x]$ by $2r$, we can define  the parameter:

\begin{eqnarray}\label{fundr}
\underline{f}^{(r)}_{\mathbf{K}}:=\inf_{h\in\Sigma[x]_r}\int_{\mathbf{K}}h(x)f(x)dx \ \ \text{s.t. $\int_{\mathbf{K}}h(x)dx=1$.}
\end{eqnarray}

\smallskip
\noindent
Clearly,  the inequality  $f_{\min,\mathbf{K}}\le\underline{f}^{(r)}_{\mathbf{K}}$ holds for all $r\in\oN$.  Lasserre \cite{Las11} gives conditions under which  the infimum is attained in the program (\ref{fundr}). 

\begin{theorem}\cite[Theorems 4.1 and 4.2]{Las11}
Assume $\mathbf{K}\subseteq \oR^n$ is compact and has nonempty interior and let $f$ be a polynomial. Then, the program (\ref{fundr}) has an optimal solution for every $r\in\oN$ and
$$\displaystyle \lim_{r\to \infty}\underline{f}^{(r)}_{\mathbf{K}}=f_{\min,\mathbf{K}}.$$
\end{theorem}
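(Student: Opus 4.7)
The plan is to split the statement into two independent parts: existence of an optimum for (\ref{fundr}) for each fixed $r$, and convergence of $\underline{f}^{(r)}_{\mathbf{K}}$ to $f_{\min,\mathbf{K}}$ as $r\to\infty$. The second part will be a short consequence of Theorem \ref{thmlasnn}; the first part requires a compactness argument for which the hypothesis that $\mathbf{K}$ has nonempty interior is essential.

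For existence, I would parametrise each $h\in\Sigma[x]_r$ via a Gram matrix representation $h(x)=v_r(x)^{\sfT} Q\, v_r(x)$, where $v_r(x)$ is the vector of monomials in $x$ of total degree at most $r$ and $Q\succeq 0$. The normalisation $\int_{\mathbf{K}} h(x)\,dx=1$ then becomes the single linear constraint $\langle M_r, Q\rangle=1$, where $M_r := \int_{\mathbf{K}} v_r(x) v_r(x)^{\sfT}\,dx$ is the truncated moment matrix of the Lebesgue measure on $\mathbf{K}$. The crucial step is to observe that $M_r\succ 0$: since $\mathbf{K}$ has nonempty interior, it has positive Lebesgue measure, and for any nonzero coefficient vector $c$ we have $c^{\sfT} M_r c = \int_{\mathbf{K}} (c^{\sfT} v_r(x))^2\,dx > 0$, because the zero set of the nonzero polynomial $c^{\sfT} v_r$ has Lebesgue measure zero. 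Consequently $\{Q\succeq 0 : \langle M_r, Q\rangle=1\}$ is a nonempty compact base of the positive semidefinite cone, while the objective $Q\mapsto \langle M^f_r,Q\rangle$, with $(M^f_r)_{\alpha,\beta}=\int_{\mathbf{K}} x^{\alpha+\beta}f(x)\,dx$, is linear and hence continuous. The minimum is therefore attained.

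For convergence, I would apply Theorem \ref{thmlasnn} with $\mu$ the Lebesgue measure on $\mathbf{K}$ (a finite Borel measure since $\mathbf{K}$ is compact), yielding
\[
f_{\min,\mathbf{K}} = \inf_{h\in\Sigma[x]} \int_{\mathbf{K}} h(x) f(x)\,dx \quad \text{s.t.}\quad \int_{\mathbf{K}} h(x)\,dx=1.
\]
Given $\epsilon>0$, pick a feasible $h_\epsilon \in \Sigma[x]$ with $\int_{\mathbf{K}} h_\epsilon f \,dx \le f_{\min,\mathbf{K}}+\epsilon$, and let $2r_0$ be its degree. Then for every $r\ge r_0$ the polynomial $h_\epsilon$ lies in $\Sigma[x]_r$, so $\underline{f}^{(r)}_{\mathbf{K}}\le f_{\min,\mathbf{K}}+\epsilon$. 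Combined with the trivial bound $f_{\min,\mathbf{K}}\le \underline{f}^{(r)}_{\mathbf{K}}$ mentioned just above the statement, letting $\epsilon\to 0$ gives $\lim_{r\to\infty} \underline{f}^{(r)}_{\mathbf{K}} = f_{\min,\mathbf{K}}$.

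The main obstacle is purely the compactness argument in the existence part, which is where the nonempty-interior hypothesis enters through the positive definiteness of $M_r$; qualitative convergence is immediate from Theorem \ref{thmlasnn}. Establishing a \emph{rate} of convergence, which is the subject of the remainder of the paper, is of course an entirely separate and much more delicate matter.
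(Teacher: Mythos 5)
Your proof is correct; note that the paper itself offers no proof of this statement, which is quoted directly from Lasserre \cite{Las11} (Theorems 4.1 and 4.2), so there is no in-paper argument to compare against. Both of your steps --- attainment via the Gram-matrix reformulation, where the nonempty interior of $\mathbf{K}$ forces the truncated moment matrix $M_r$ to be positive definite and hence makes $\{Q\succeq 0:\langle M_r,Q\rangle=1\}$ a nonempty compact base of the positive semidefinite cone, and asymptotic convergence by truncating a near-optimal sum-of-squares density supplied by Theorem \ref{thmlasnn} combined with the monotonicity of $\underline{f}^{(r)}_{\mathbf{K}}$ in $r$ --- coincide with the arguments in Lasserre's original proofs.
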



\noindent
We now recall  how to compute the parameter $\underline{f}^{(r)}_{\mathbf{K}}$ in terms of the moments $m_\alpha(\bK)$ of the Lebesgue measure on $\bK$, where
\begin{equation*}\label{mack}
m_{\alpha}(\mathbf{K}):=\int_{\mathbf{K}}x^{\alpha}dx\ \ \ \text{ for } \alpha\in \oN^n,
\end{equation*}
and $x^{\alpha}:=\prod_{i=1}^nx_i^{\alpha_i}$.

\smallskip
\noindent
Let $N(n,r):=\{\alpha\in \oN^n: \sum_{i=1}^n\alpha_i\le r\}$, and suppose $f(x)=\sum_{\beta\in N(n,d)}f_{\beta}x^{\beta}$ has degree $d$.
If we write  $h\in\Sigma[x]_{r}$ as $h(x)=\sum_{\alpha\in N(n,2r)}h_{\alpha}x^{\alpha}$, then the parameter $\underline{f}^{(r)}_{\mathbf{K}}$ from
 (\ref{fundr}) can be reformulated as follows:
\begin{eqnarray}\label{eqSDP}
\underline{f}^{(r)}_{\mathbf{K}}&=&\min\sum_{\beta\in N(n,d)}f_{\beta}\sum_{\alpha\in N(n,2r)}h_{\alpha}m_{\alpha+\beta}(\mathbf{K})\label{fundr2}\\
 & &\text{ s.t. } \ \ \sum_{\alpha\in N(n,2r)}h_{\alpha}m_{\alpha}(\mathbf{K})=1,\nonumber\\
&&\ \ \ \ \ \ \ \sum_{\alpha\in N(n,2r)}h_{\alpha}x^{\alpha}\in\Sigma[x]_r.\nonumber
\end{eqnarray}

\smallskip\noindent
Hence, if we know the moments $m_{\alpha}(\mathbf{K})$ for all $\alpha\in\oN^n$ with $|\alpha|:=\sum_{i=1}^n\alpha_i \le d+2r$,
 then we can compute the parameter  $\underline{f}^{(r)}_{\mathbf{K}}$ by solving the semidefinite program
  (\ref{eqSDP}) which involves a LMI of size $n+2r\choose 2r$.
{So the bound $\underline{f}^{(r)}_{\mathbf{K}}$ can be computed in polynomial time for fixed $d$ and $r$ (to any fixed precision).}

\smallskip\noindent
When $\mathbf{K}$ is the standard  simplex $\Delta_n=\{x\in\oR^n_+:\sum_{i=1}^n x_i\le1\}$, the unit hypercube $\oQ_n=[0,1]^n$,  or the unit ball $B_1(0)=\{x\in \oR^n: \|x\|\le 1\}$,  there exist explicit formulas for the moments $m_{\alpha}(\mathbf{K})$.
Namely, for the standard simplex, we have
\begin{equation}\label{mc0}
m_{\alpha}(\Delta_n)={\prod_{i=1}^n\alpha_i!\over (|\alpha| +n)!},
\end{equation}
see e.g., \cite[equation (2.4)]{LZ01} or \cite[equation (2.2)]{GM78}. From this one  can easily calculate the moments for the hypercube $\oQ_n$: 
\begin{eqnarray*}\label{galphac}
m_{\alpha}(\oQ_n)=\int_{\oQ_n}x^{\alpha}dx=\prod_{i=1}^n \int_0^1x_i^{\alpha_i}dx_i=\prod_{i=1}^n \frac{1}{\alpha_i+1}.
\end{eqnarray*}

\smallskip
\noindent
To state the moments for the unit Euclidean ball, we will use the notation $[n]:= \{1,\ldots,n\}$, the Euler gamma function $\Gamma(\cdot)$, and the notation for the double factorial of an integer $k$:
\begin{eqnarray*}
k!! = \left\{ \begin{array}{ll}
k\cdot(k-2)\cdots 3\cdot1, & \textrm{if $k>0$ is odd,}\\
k\cdot(k-2)\cdots 4\cdot2, & \textrm{if $k>0$ is even,}\\
1 & \textrm{if $k=0$ or $k=-1$.}
\end{array} \right.
\end{eqnarray*}
In terms of this notation, the moments for the unit Euclidean ball are given by:
\begin{eqnarray}\label{malphaball}
m_{\alpha}(B_1(0)) = \left\{ \begin{array}{ll}
\frac{\pi^{n/2}\prod_{i=1}^n\left(\alpha_i-1\right)!!}{\Gamma\left(1+{n+|\alpha|\over 2}\right)2^{|\alpha|/2}} =
{\pi^{(n-1)/2}2^{(n+1)/ 2} \prod_{i=1}^n\left(\alpha_i-1\right)!! \over (n+|\alpha|)!!}
&    \textrm{\quad if $\alpha_i$ is even for all $i\in[n]$,}\\
0 & \textrm{\quad otherwise.}
\end{array} \right.
\end{eqnarray}
One may prove relation \eqref{malphaball} using
\[
\int_{B_1(0)} x^\alpha dx = \frac{1}{\Gamma(1+(n+|\alpha|)/2)}\int_{\mathbb{R}^n} x^\alpha\mbox{exp}\left(-\|x\|^2\right)dx
\]
(see, e.g.,\ \cite[Theorem 2.1]{Las14}), together with the fact (see, e.g., page $872$ in \cite{Las11}) that
 \begin{eqnarray*}
\int_{-\infty}^{+\infty}t^p \exp\left(-t^2/2\right)dt = \left\{ \begin{array}{ll}
\sqrt{2\pi}(p-1)!! & \textrm{\quad if $p$ is even,}\\
0 & \textrm{\quad if $p$ is odd,}
\end{array} \right.
\end{eqnarray*}
and the identity $\Gamma(1+{k\over 2})={k!!\over 2^{(k+1)/2}} \sqrt \pi$ for all integers $k\in\oN$ (see e.g., \cite[Section 6.1.12]{AS64}).

\smallskip\noindent
For a general  polytope $\mathbf{K}\subseteq\oR^n$, it is a hard problem to compute the moments $m_{\alpha}(\mathbf{K})$.
In fact, the problem of computing the volume of polytopes of varying dimensions is already \#P-hard \cite{DF88}.
On the other hand,
 any polytope $\mathbf{K}\subseteq\oR^n$  can be triangulated into finitely many simplices (see e.g., \cite{LRS08}) so that one  could  use (\ref{mc0}) to obtain the moments $m_{\alpha}(\mathbf{K})$ of $\bK$. The complexity of this method depends on the number of simplices in the triangulation.
However, this number can be exponentially large (e.g., for the hypercube) and the problem of finding  the smallest possible triangulation of a polytope is NP-hard, even in fixed dimension $n=3$ (see e.g., \cite{LRS08}).

\subsubsection*{Example}
Consider the minimization of the Motzkin polynomial $f(x_1,x_2)=x_1^4x_2^2+x_1^2x_2^4-3x_1^2x_2^2+1$ over the hypercube $\bK = [-2,2]^2$, which has four global minimizers at the points $(\pm 1,\pm 1)$, and $f_{\min,\bK} = 0$.
Figure \ref{figure:motzkin18} shows the computed optimal sum of squares density function $h^*$, for  $r=12$, corresponding to $\underline{f}^{(12)}_\bK = 0.406076$. We observe that the optimal density $h^*$ shows four peaks at the four global minimizers and thus,   it  appears to approximate  the density of a  convex combination of the Dirac measures at the four minimizers.

\begin{figure}[h!]
\begin{center}
\includegraphics[width=0.45\textwidth]{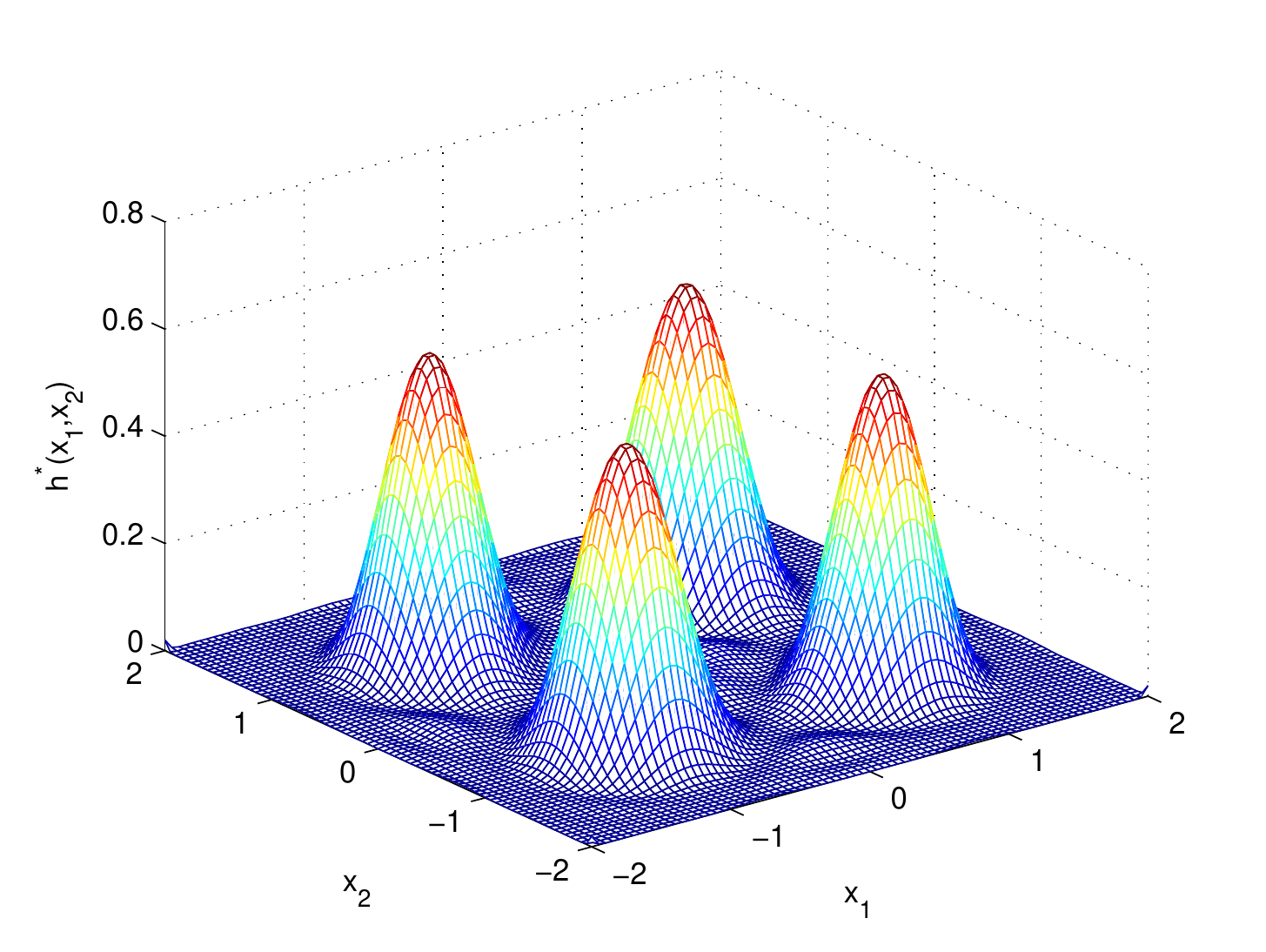}
\includegraphics[width=0.45\textwidth]{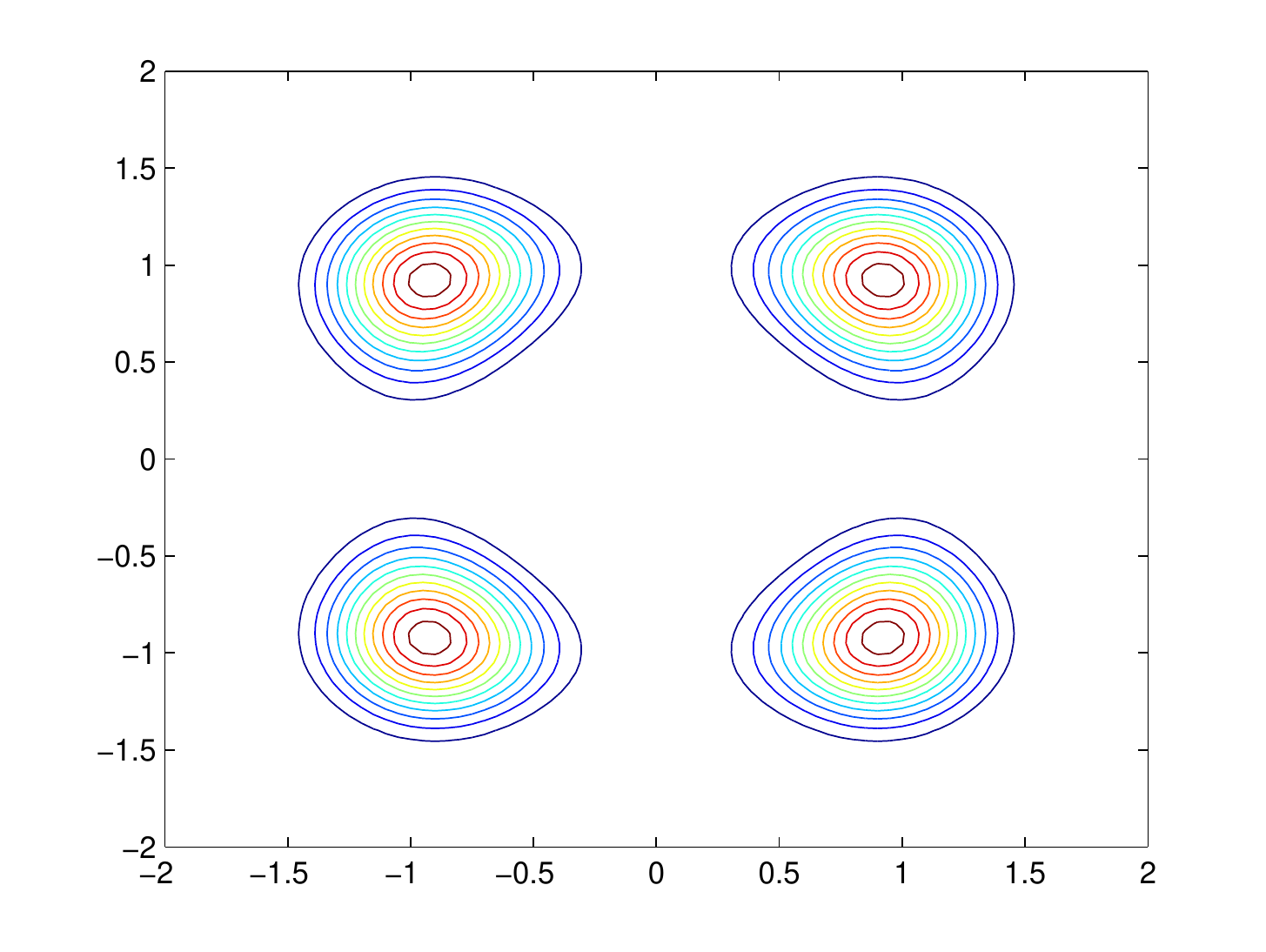}\\
  \caption{\label{figure:motzkin18}Graph and contour plot of $h^*(x)$ on $[-2,2]^2$ ($r=12$ and $\deg(h^*)=24$) for the Motzkin polynomial.}
\end{center}
\end{figure}

\noindent We will present several additional numerical examples in Section \ref{sec:numerial examples}.

\subsection{Our main results}
In this paper we analyze the quality of the upper bounds $\underline{f}^{(r)}_{\mathbf{K}}$ from (\ref{fundr}) for the minimum $\fminK$ of $f$ over $K$.
Our main result  is an  upper bound for the range $\underline{f}^{(r)}_{\mathbf{K}}-f_{\min,\mathbf{K}}$, which applies to the case when $f$
 is Lipschitz continuous on $\mathbf{K}$ and when $\mathbf{K}$ is a full-dimensional compact set satisfying the
   additional  condition  from  Assumption \ref{propeta}, see Theorem \ref{thmmain} below.
We will use throughout the following notation about the set $\mathbf{K}$.

We let $D(\mathbf{K})=\max_{x,y\in\mathbf{K}} \|x-y\|^2$ denote the {(squared)} diameter of the set $\bK$, where $\|x\|=\sqrt{\sum_{i=1}^n{x_i}^2}$ is the $\ell_2$-norm.
Moreover, $w_{\min}(\mathbf{K})$ is the minimal width of $\mathbf{K}$, which is the minimum distance between two distinct parallel supporting hyperplanes of $\mathbf{K}$.
Throughout, $B_\epsilon(a):=\{x\in \oR^n: \|x-a\|\le \epsilon\}$ denotes the Euclidean ball centered at $a\in\oR^n$ and with radius $\epsilon>0$. With $\gamma_n$ denoting the volume of the $n$-dimensional unit ball, the volume of the ball $B_\epsilon(a)$ is given by
$\text{vol} B_\epsilon (a)= \epsilon^n\gamma_n.$

\smallskip
{We now formulate our geometric assumption about the set $\mathbf K$ which says (roughly) that around any point $a\in \mathbf K$ there is a ball intersecting a constant fraction of the unit ball.}

\begin{assumption}\label{propeta}
{For all points $a\in \mathbf K$ there exist constants $\eta_{\mathbf{K}}>0$ and ${\epsilon}_{\mathbf{K}}>0$ such that}
\begin{equation}\label{volballn}
\vol (B_\epsilon(a)\cap \mathbf{K}) \ge {\eta_{\mathbf{K}}}\vol B_\epsilon(a)={\eta_{\mathbf{K}}}\epsilon^n\gamma_n \ \ \text{for all $0<\epsilon\le{\epsilon}_{\mathbf{K}}$}.
\end{equation}
\end{assumption}

\smallskip\noindent
{Note that Assumption \ref{propeta} implies that the set $\mathbf K$ has positive Lebesgue density at all points $a\in \mathbf K$.}
For all sets $\mathbf{K}$ satisfying Assumption~\ref{propeta}, we also define the parameter
\begin{equation}\label{addefrK}
{r_\bK:= \max\left\{ {D(\mathbf{K})e\over 2\epsilon_{\bK}^{3}}, {n}\right\} \ \text{ if } \epsilon_\bK \le 1, \ \text{ and }\
r_\bK:={D(\bK)e\over 2}  \ \text{ if } \epsilon_\bK \ge 1.}
\end{equation}
 {Here, $e=2.71828...$ denotes the base of the natural logarithm.}
{Note that the parameters $\eta_{\mathbf K}$, $\epsilon_{\mathbf K}$ and $r_{\mathbf K}$ depend not only on the set $\mathbf K$ but also on the point $a\in \mathbf K$; we omit the dependance on $a$  to simplify notation.
Assumption \ref{propeta} will be used in the case when the point $a$ is a global minimizer in $\mathbf K$ of the polynomial to be analyzed. }

\smallskip\noindent
{For instance,  convex bodies and, more generally, compact star-shaped sets satisfy Assumption \ref{propeta} (see   Section \ref{secass}).
  We now give an example of a set $\mathbf K$ that does not satisfy Assumption \ref{propeta} and refer to Section \ref{secass} for more discussion about Assumption \ref{propeta}.}

\begin{example}
{Consider the following set $\bK\subseteq\oR^2$,  displayed  in Figure \ref{countereg}:
$$\bK=\{x\in\oR^2\ :\ x\ge0, (x_1-1)^2+(x_2-1)^2\ge 1\}.$$
One can easily check that Assumption \ref{propeta} is not satisfied, since the condition \eqref{volballn} does not hold for the two points $a$ and $b$.}
\end{example}

\begin{figure}
\centering
\begin{tikzpicture}[scale=0.8]
\centering
\draw  (-3,-3)--(-3,0);
\draw  (-3,-3)--(0,-3);
\draw (0,-3) arc (-90:-180:3cm);
\filldraw [black] (-3,0) circle (1pt);
\filldraw [black] (0,-3) circle (1pt);
\draw (-3,0.2) node {$a$};
\draw (0.2,-3) node {$b$};
\draw (-2.5,-2.5) node {$\bK$};
\end{tikzpicture}
\caption[Two-dimensional Euclidean ball\index{Euclidean ball}]{This set $\bK$ does not satisfy Assumption \ref{propeta} at the points $a$ and $b$.}
\label{countereg}
\end{figure}

\smallskip\noindent
We  now present our main result.

\begin{theorem}\label{thmmain}
Assume that $\mathbf{K}\subseteq\oR^n$ is compact and satisfies Assumption \ref{propeta}.
Then there exists a constant $\zeta(\mathbf{K})$ (depending only on $\bK$) such that, for all Lipschitz continuous functions $f$ with Lipschitz constant $M_f$ on $\mathbf{K}$, the following inequality holds:
\begin{equation}\label{thmmaineq1}
\underline{f}^{(r)}_{\mathbf{K}}-f_{\min,\mathbf{K}}\le {\zeta(\mathbf{K}) M_f\over \sqrt{r}} \ \ \text{ for all  } r\ge r_\bK+1.
\end{equation}

\smallskip\noindent
Moreover, if $f$ is a polynomial of degree $d$  and $\mathbf{K}$ is a convex body, then
\begin{equation}\label{thmmaineq2}
\underline{f}^{(r)}_{\mathbf{K}}-f_{\min,\mathbf{K}}\le  {2d^2 \zeta(\mathbf{K})\sup_{x\in\mathbf{K}}|f(x)|\over w_{\min}(\mathbf{K})} {1\over \sqrt r}\ \ \text{ for all  } r\ge r_\bK+1.
\end{equation}
\end{theorem}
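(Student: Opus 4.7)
The plan is to construct an explicit feasible density $h \in \Sigma[x]_r$ for the program (\ref{fundr}) that concentrates around a global minimizer of $f$, and to estimate the resulting integral. Let $x^{*} \in \mathbf K$ attain $f_{\min,\mathbf K}$, set $k = \lfloor r/2\rfloor$, and consider the polynomial
$$
p(x) = \left(1 - \frac{\|x - x^{*}\|^{2}}{D(\mathbf K)}\right)^{k},
$$
which has degree $2k\le r$ and is nonnegative on $\mathbf K$ because $\|x-x^{*}\|^{2}\le D(\mathbf K)$ there. Setting $h(x) := p(x)^{2}/\int_{\mathbf K}p(y)^{2}dy$ gives an element of $\Sigma[x]_r$ with unit integral on $\mathbf K$, hence a feasible solution of (\ref{fundr}). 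Lipschitz continuity of $f$ at $x^{*}$ and the Cauchy--Schwarz inequality then yield
$$
\underline{f}^{(r)}_{\mathbf K}-f_{\min,\mathbf K}\le \int_{\mathbf K} h(x)(f(x)-f(x^{*}))dx \le M_{f}\sqrt{R_{r}}, \qquad R_{r} := \frac{\int_{\mathbf K} p(x)^{2}\|x-x^{*}\|^{2}dx}{\int_{\mathbf K} p(x)^{2}dx},
$$
so it remains to prove $R_{r}= O(1/r)$ with a constant depending only on $\mathbf K$.

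Since the integrand of $R_{r}$ is radial in $y = x - x^{*}$, I would pass to polar coordinates, writing $\rho=\|y\|$ and letting $\sigma(\rho)$ denote the $(n-1)$-dimensional surface measure of $\{\|y\|=\rho\}\cap(\mathbf K-x^{*})$:
$$
R_{r} = \frac{\int_{0}^{\sqrt{D(\mathbf K)}}(1-\rho^{2}/D(\mathbf K))^{2k}\rho^{2}\sigma(\rho)d\rho}{\int_{0}^{\sqrt{D(\mathbf K)}}(1-\rho^{2}/D(\mathbf K))^{2k}\sigma(\rho)d\rho}.
$$
For the numerator, I would use the trivial bound $\sigma(\rho)\le n\gamma_n\rho^{n-1}$ and the substitution $u=\rho^{2}/D(\mathbf K)$ to reduce to a Beta integral, which by Stirling behaves like $C_n \,(2k)^{-(n+3)/2}$. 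For the denominator, the key point is that $\sigma$ may be degenerate on many directions (e.g., when $x^{*}$ lies on the boundary of $\mathbf K$), so Assumption \ref{propeta} is essential: it gives $\int_{0}^{\epsilon}\sigma(\rho)d\rho=\vol(B_{\epsilon}(x^{*})\cap \mathbf K)\ge \eta_{\mathbf K}\gamma_{n}\epsilon^{n}$ for every $\epsilon\le \epsilon_{\mathbf K}$, from which one obtains a lower bound of the form $c(\mathbf K)(2k)^{-(n+1)/2}$ by restricting the integration to a carefully chosen scale $\delta^{*}\sim\sqrt{D(\mathbf K)/k}$ and using a Fubini/integration-by-parts argument. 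The hypothesis $r\ge r_{\mathbf K}+1$ together with the definition (\ref{addefrK}) ensures $\delta^{*}\le \epsilon_{\mathbf K}$, so that Assumption \ref{propeta} is applicable; at this step the quantities $D(\mathbf K)$, $\epsilon_{\mathbf K}$, $\eta_{\mathbf K}$, and $n$ collapse into a single constant $\zeta(\mathbf K)$. Combining the two estimates gives $R_{r}\le \zeta(\mathbf K)^2/r$ and hence (\ref{thmmaineq1}).

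For the polynomial case (\ref{thmmaineq2}), the plan is to invoke a Markov-type inequality for polynomials on convex bodies (Wilhelmsen's theorem), which states that for a polynomial $f$ of degree $d$ on a convex body $\mathbf K\subseteq\oR^{n}$, the Lipschitz constant on $\mathbf K$ satisfies $M_{f}\le \tfrac{2d^{2}}{w_{\min}(\mathbf K)}\sup_{x\in\mathbf K}|f(x)|$; substituting into (\ref{thmmaineq1}) immediately yields (\ref{thmmaineq2}). The main obstacle I foresee is the denominator estimate for $R_r$: one needs a sharp lower bound that does not lose any factor worse than $(2k)^{-(n+1)/2}$ relative to the numerator, and this is precisely where Assumption \ref{propeta} must be used quantitatively. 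The factor $D(\mathbf K)e/2$ appearing in (\ref{addefrK}) strongly suggests that the scale $\delta^{*}$ is tuned using the elementary estimate $(1-(\delta^{*})^{2}/D(\mathbf K))^{2k}\ge e^{-1}$, which requires $k(\delta^{*})^{2}/D(\mathbf K)\le 1/2$, consistent with the threshold $r\ge r_{\mathbf K}+1$.
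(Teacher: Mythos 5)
Your construction is sound but genuinely different from the one in the paper. The paper takes as density the degree-$4r$ truncation $H_{r,a}$ of the Taylor series of a Gaussian centered at the minimizer; this requires proving that the truncated exponential $\phi_{2r}$ is a sum of squares (Lemma~\ref{lemf2rsos}), controlling the Taylor remainder $\phi_{2r}(t)-e^{-t}\le t^{2r+1}/(2r+1)!$ via Stirling's inequality, and bounding the first moment $\int_{\mathbf K} c^r_{\mathbf K,a}\|x-a\|H_{r,a}(x)\,dx$ directly by splitting off the Gaussian part. Your kernel $\bigl(1-\|x-x^{*}\|^{2}/D(\mathbf K)\bigr)^{2k}$ is a perfect square by construction, so no SOS certificate is needed, and your passage through the second moment $R_r$ via Cauchy--Schwarz reduces everything to Beta-integral asymptotics; this is more elementary than the paper's route. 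Both arguments use Assumption~\ref{propeta} in exactly the same place and for the same reason: to lower-bound the mass the density assigns to $\mathbf K$ near a minimizer that may sit on the boundary (in the paper this is Lemma~\ref{lemcra} bounding $c^r_{\mathbf K,a}$; in yours it is the denominator of $R_r$). Two small points. First, your stated asymptotics ($C_n(2k)^{-(n+3)/2}$ for the numerator, $c(\mathbf K)(2k)^{-(n+1)/2}$ for the denominator) each seem to carry a spurious factor $(2k)^{-1/2}$: the substitution $u=\rho^{2}/D(\mathbf K)$ gives $\tfrac{1}{2}n\gamma_n D(\mathbf K)^{(n+2)/2}B(\tfrac n2+1,2k+1)\sim C(2k)^{-(n+2)/2}$ for the numerator, while the direct bound $(1-(\delta^{*})^{2}/D(\mathbf K))^{2k}\,\eta_{\mathbf K}\gamma_n(\delta^{*})^{n}\gtrsim c(2k)^{-n/2}$ handles the denominator without any Fubini argument; the two discrepancies cancel and the ratio is $\Theta(1/k)$ either way, so the rate is unaffected. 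Second, the threshold $r_{\mathbf K}$ in (\ref{addefrK}) was tuned to the Gaussian construction (the $\epsilon_{\mathbf K}^{-3}$ arises from the choice $\sigma=\epsilon$ there); your condition $\delta^{*}\le\epsilon_{\mathbf K}$ only needs $r\gtrsim D(\mathbf K)/\epsilon_{\mathbf K}^{2}$, which is implied by $r\ge r_{\mathbf K}+1$, so the claimed range of validity follows a fortiori. The Markov-inequality step for (\ref{thmmaineq2}) is identical to the paper's use of (\ref{ineqmf}).
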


\noindent
The key idea to show this result is to select suitable sums of squares densities which we are able to analyse. For this, we will select a global minimizer $a$ of $f$ over $\mathbf{K}$ and consider the Gaussian distribution with mean $a$ and, as sums of squares densities, we will select the polynomials $H_{r,a}$ obtained  by truncating  the Taylor series expansion of the Gaussian distribution, see relation (\ref{hrax}).

{\begin{remark}
{When the polynomial $f$ has a root in $\mathbf{K}$} (which can be assumed without loss of generality),
  the parameter
$\sup_{x\in \mathbf K} |f(x)|$ involved in relation (\ref{thmmaineq2})  can easily be upper bounded in terms of the range of values of $f$; namely,
$$\sup_{x\in \mathbf K} |f(x)| \le f_{\max,\mathbf K}-f_{\min,\mathbf K},$$
where $f_{\max,\mathbf K}$ denotes the maximum value of $f$ over $\mathbf K$.
Hence relation (\ref{thmmaineq2}) also implies an upper bound on $\underline{f}^{(r)}_{\mathbf{K}}-f_{\min,\mathbf{K}}$
in terms of the range $ f_{\max,\mathbf K}-f_{\min,\mathbf K},$ as is commonly used in approximation analysis (see, e.g., \cite{KHE08,KLP06}).
\end{remark}
}



\subsection{Contents of the paper}
Our paper is organized as follows. In Section \ref{secpfmain}, we give a constructive proof for our main result in Theorem \ref{thmmain}.
In Section \ref{secgenerate} we show how to obtain feasible points in $\bK$ that correspond to the bounds $\underline{f}^{(r)}_{\mathbf{K}}$ through sampling.
This is followed by a section with numerical examples (Section \ref{sec:numerial examples}).
 {Finally, in the concluding remarks (Section \ref{sec:conclusion}), we revisit Assumption \ref{propeta}, and discuss
  perspectives for future research.
}

\section{Proof of our main result in Theorem \ref{thmmain}}\label{secpfmain}
In this section we prove our main result in Theorem \ref{thmmain}.
 Our analysis holds for Lipschitz continuous functions, so we  start by reviewing some relevant properties in Section \ref{seclipcf}.
  In the next step we indicate in Section \ref{secmr} how to select the polynomial density function $h$ as a special sum of squares that we will be able to analyze. Namely,  we let  $a$ denote a global minimizer  of the function $f$ over the set $\mathbf{K}\subseteq\oR^n$.
Then we consider the density function $G_a$ in (\ref{Ga}) of the Gaussian distribution with mean  $a$  {(and suitable variance)} and  the polynomial $H_{r,a}$ in (\ref{hrax}), which is obtained from the truncation at degree $2r$ of the Taylor series expansion of the Gaussian density function $G_a$.
The final  step will be to analyze the quality of the bound obtained by selecting the polynomial $H_{r,a}$ and this will be the most technical part of the proof, carried out in Section \ref{secthmpf}.

\subsection{Lipschitz continuous functions}\label{seclipcf}

A function $f$ is said to be Lipschitz continuous on $\mathbf{K}$, with Lipschitz constant $M_f$, if it satisfies:
\begin{equation*}\label{ineqmf1}
|f(y)-f(x)|\le M_f\|y-x\| \quad \text{ for all } x,y\in \mathbf K.
\end{equation*}
If $f$ is continuous and differentiable on $\mathbf{K}$, then $f$ is Lipschitz continuous on $\mathbf{K}$ with respect to the constant
\begin{equation}\label{formf}
M_f=\max_{x\in\mathbf{K}}\|\nabla f(x)\|.
\end{equation}
Furthermore, if $f$ is an $n$-variate polynomial with degree $d$, then the Markov inequality for $f$ on a convex body $\mathbf{K}$ reads as
\begin{equation*}
\max_{x\in\mathbf{K}}\|\nabla f(x)\|\le\frac{2d^2}{w_{\min}(\mathbf{K})}\sup_{x\in\mathbf{K}}|f(x)|,
\end{equation*}
see e.g., \cite[relation (8)]{KHE08}. Thus, together with (\ref{formf}), we have that $f$ is Lipschitz continuous on $\mathbf{K}$ with respect to the constant
\begin{equation}\label{ineqmf}
M_f\le\frac{2d^2}{w_{\min}(\mathbf{K})}\sup_{x\in\mathbf{K}}|f(x)|.
\end{equation}

\subsection{Choosing the polynomial density function $H_{r,a}$}\label{secmr}

Consider the function
\begin{equation}\label{Ga}
G_a(x):={1\over (2\pi \sigma^2)^{n/2}}\exp\left(-{\|x-a\|^2\over 2\sigma^2}\right),
\end{equation}
which is the probability density function of the Gaussian distribution with mean $a$ and standard variance $\sigma$ (whose value will be defined later).
Let the constant $C_{\mathbf{K},a}$ be defined by
\begin{equation}\label{cka}
\int_{\mathbf{K}}C_{\mathbf{K},a}G_a(x)dx=1.
\end{equation}

\smallskip\noindent
Observe that $G_a(x)$ is equal to the function ${1\over (2\pi \sigma^2)^{n/2}}e^{-t}$ evaluated at the point $t={\|x-a\|^2\over 2\sigma^2}$.

\noindent Denote by $H_{r,a}$ the Taylor series expansion of $G_a$ truncated at the order $2r$. That is,
\begin{equation}\label{hrax}
H_{r,a}(x)={1\over (2\pi \sigma^2)^{n/2}}\sum_{k=0}^{2r}{1\over k!}\left(-{\|x-a\|^2\over 2\sigma^2}\right)^k.
\end{equation}
Moreover consider the constant $c^r_{\mathbf{K},a}$, defined by
\begin{equation}\label{ckra}
\int_{\mathbf{K}}c^r_{\mathbf{K},a}H_{r,a}(x)dx=1.
\end{equation}
The next step is to show that  $H_{r,a}$ is a sum of squares of polynomials and thus $H_{r,a}\in\Sigma[x]_{2r}$.
This follows from the next lemma.

\begin{lemma}\label{lemf2rsos}
Let $\phi_{2r}(t)$ denote the (univariate) polynomial of degree $2r$ obtained by truncating the Taylor series expansion of $e^{-t}$  at the order $2r$. That is,
\begin{equation*}\label{phi2rt}
\phi_{2r}(t):=\sum_{k=0}^{2r}{(-t)^k\over k!}.
\end{equation*}
Then  $\phi_{2r}$ is a sum of squares of polynomials.
Moreover, we have
\begin{equation}\label{fmf2r}
0\le \phi_{2r}(t) - e^{-t} \le {t^{2r+1}\over (2r+1)!} \quad \text{ for all } t\ge 0.
\end{equation}
\end{lemma}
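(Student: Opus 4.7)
The plan is to establish the quantitative bound (\ref{fmf2r}) first via Taylor's theorem and then derive the sum of squares property as a consequence, rather than proving the SOS property by an explicit decomposition.

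First, I would apply Taylor's theorem to the function $e^{-t}$ expanded around $0$. Since the $k$-th derivative of $e^{-t}$ is $(-1)^k e^{-t}$, the Taylor polynomial of degree $2r$ coincides exactly with $\phi_{2r}(t)$. Using the Lagrange form of the remainder, there exists $\xi$ between $0$ and $t$ such that
\begin{equation*}
e^{-t} = \phi_{2r}(t) + \frac{(-1)^{2r+1}e^{-\xi}}{(2r+1)!}\,t^{2r+1} = \phi_{2r}(t) - \frac{e^{-\xi}}{(2r+1)!}\,t^{2r+1}.
\end{equation*}
Hence $\phi_{2r}(t)-e^{-t} = \frac{e^{-\xi}}{(2r+1)!} t^{2r+1}$. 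For $t\ge 0$ we have $\xi\ge 0$, so $0< e^{-\xi}\le 1$, from which both inequalities in (\ref{fmf2r}) follow immediately.

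Next, to obtain the SOS property, I would observe that a univariate polynomial is a sum of two squares whenever it is nonnegative on the whole real line (this is the classical fact obtained by grouping complex-conjugate roots and the real roots, which must have even multiplicity). So it suffices to check $\phi_{2r}(t)\ge 0$ for every $t\in \oR$. For $t\ge 0$, the already-established bound $\phi_{2r}(t)\ge e^{-t}>0$ does the job. For $t\le 0$, I would note that every summand $(-t)^k/k!$ of $\phi_{2r}(t)=\sum_{k=0}^{2r}(-t)^k/k!$ is nonnegative (since $-t\ge 0$ and $k!>0$), so the sum is trivially nonnegative.

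I do not anticipate any serious obstacle: the whole argument is a straightforward application of Taylor's theorem combined with the classical SOS characterization in one variable. The only mild subtlety is to ensure the Lagrange remainder is signed correctly (which depends only on the parity of $2r+1$, i.e.\ on it being odd), and this is what drives both the lower bound in (\ref{fmf2r}) and, indirectly through the Taylor estimate, the positivity of $\phi_{2r}$ on $[0,\infty)$.
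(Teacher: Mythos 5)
Your proof is correct, and it takes a genuinely different route from the paper's for the sum-of-squares part. Both arguments reduce the SOS claim to showing $\phi_{2r}(t)\ge 0$ for all $t\in\oR$ and then invoke the classical univariate fact (Hilbert's theorem in one variable), and both prove the bound (\ref{fmf2r}) by the Lagrange remainder exactly as you do. The difference lies in how nonnegativity on $\oR$ is established. The paper notes that $\phi_{2r}(\pm\infty)=+\infty$ and checks the stationary points: from the identity $\phi_{2r}'(t)=-\phi_{2r}(t)+{t^{2r}\over(2r)!}$ it follows that $\phi_{2r}(t)={t^{2r}\over(2r)!}\ge 0$ wherever $\phi_{2r}'(t)=0$. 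You instead split the line: for $t\le 0$ every summand $(-t)^k/k!$ is nonnegative, and for $t\ge 0$ you reuse the already-proved lower bound $\phi_{2r}(t)\ge e^{-t}>0$. Your ordering (Taylor estimate first, nonnegativity second) makes the positivity on $[0,\infty)$ come for free, and avoids having to spot the derivative identity; the paper's stationary-point argument is self-contained in the sense that the SOS claim does not depend on the remainder estimate, but it requires that small algebraic observation. Both are complete and correct; yours is arguably the more elementary of the two.
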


\begin{proof}
First, we show that $\phi_{2r}$ is a sum of squares. As   $\phi_{2r}$ is a univariate polynomial, by Hilbert's Theorem (see e.g., \cite[Theorem 3.4]{ML09}), it suffices to show that  $\phi_{2r}(t)\ge0$ for all $t\in\oR$.
As $\phi_{2r}(-\infty)=\phi_{2r}(+\infty) =+\infty$, it suffices to show that $\phi_{2r}(t)\ge 0$ at all the stationary points $t$ where $\phi_{2r}'(t)=0$.
For this,  observe that $\phi_{2r}'(t)=\sum_{k=1}^{2r}(-1)^k{t^{k-1}\over (k-1)!},$
so that it can be written as $\phi_{2r}'(t)=-\phi_{2r}(t)+{t^{2r}\over (2r)!}.$ Hence, for all $t$ with $\phi_{2r}'(t)=0$, we have $\phi_{2r}(t)={t^{2r}\over (2r)!}\ge0$.

\smallskip\noindent
Next, we show that $\phi_{2r}(t)\ge e^{-t}$ for all $t\ge 0$. Fix $t\ge 0$. Then, by Taylor Theorem (see e.g., \cite{WW96}), one has $e^{-t}=\phi_{2r}(t)+\frac{\phi^{(2r+1)}(\xi)t^{2r+1}}{(2r+1)!}$ for some $\xi\in [0,t]$. As $\phi^{(2r+1)}(\xi)=-e^{-\xi}$, one can conclude
that $e^{-t}-\phi_{2r}(t)=  - {e^{-\xi}t^{2r+1}\over (2r+1)!} \le 0$ and $e^{-t}-\phi_{2r}(t)\ge -{t^{2r+1}\over (2r+1)!}.$
\qed
\end{proof}

\smallskip\noindent

\noindent We now consider the parameter $f^{(r)}_{\mathbf{K},a}$ defined as
\begin{eqnarray}
f^{(r)}_{\mathbf{K},a}:=\int_{\mathbf{K}}f(x)c^r_{\mathbf{K},a}H_{r,a}(x)dx.\label{frk}
\end{eqnarray}

\smallskip\noindent
Our main technical result is the following upper bound  for the range $f^{(r)}_{\mathbf{K},a}-f_{\min,\mathbf{K}}$.

\begin{theorem}\label{thmfrk}
Assume  $\mathbf{K}\subseteq\oR^n$ is compact and satisfies Assumption \ref{propeta}, and consider the parameter  $r_\bK$ from (\ref{addefrK}).
Then there exists a constant $\zeta(\mathbf{K})$ (depending only on $\bK$) such that, for all Lipschitz continuous functions $f$ with Lipschitz constant $M_f$ on $\mathbf{K}$, the following inequality holds:
\begin{equation}\label{thmfrkeq1}
f^{(r)}_{\mathbf{K},a}-f_{\min,\mathbf{K}}\le {\zeta(\mathbf{K}) M_f\over \sqrt{2r+1}},\ \ \text{ for all  } r\ge {r_\bK\over 2}.
\end{equation}

\smallskip\noindent
Moreover, if $f$ is a polynomial of degree $d$ and $\mathbf{K}$ is a convex body, then
\begin{equation}\label{thmfrkeq2}
f^{(r)}_{\mathbf{K},a}-f_{\min,\mathbf{K}}\le  {2d^2 \zeta(\mathbf{K})\sup_{x\in\mathbf{K}}|f(x)|\over w_{\min}(\mathbf{K})\sqrt{2r+1}},\ \ \text{ for all  } r\ge {r_\bK\over 2}.
\end{equation}
\end{theorem}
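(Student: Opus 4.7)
The plan is to reduce the bound on $f^{(r)}_{\bK,a} - f_{\min,\bK}$ to an estimate on the ratio $I_1/I_0$, where $I_j := \int_\bK \|x-a\|^j H_{r,a}(x)\,dx$, and then exploit the near-Gaussian structure of $H_{r,a}$ together with Assumption \ref{propeta}. Since $a$ is a global minimizer of $f$ on $\bK$, we have $f(a) = f_{\min,\bK}$, and the fact that $c^r_{\bK,a} H_{r,a}$ is a probability density by (\ref{ckra}) gives
\[
f^{(r)}_{\bK,a} - f_{\min,\bK} = c^r_{\bK,a} \int_\bK (f(x) - f(a)) H_{r,a}(x)\,dx.
\]
Lemma \ref{lemf2rsos} implies $H_{r,a}(x) \ge 0$ on $\oR^n$, and so the Lipschitz bound $|f(x) - f(a)| \le M_f\|x - a\|$ yields $f^{(r)}_{\bK,a} - f_{\min,\bK} \le M_f\, I_1/I_0$. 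This reduces the task to proving $I_1/I_0 \le \zeta(\bK)/\sqrt{2r+1}$.

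Next I would choose the variance $\sigma^2$ in (\ref{Ga}) to balance two competing requirements: it must be small enough that $\sigma \le \epsilon_\bK$ (so that Assumption \ref{propeta} can be applied on a ball of radius $\sigma$), and yet large enough that the Taylor truncation error of $\phi_{2r}$ evaluated at $t_{\max} := D(\bK)/(2\sigma^2)$ stays controlled. A natural choice is roughly $\sigma^2 = D(\bK)\,e/(2(2r+1))$, which gives $t_{\max} = (2r+1)/e$, and Stirling's formula then furnishes the estimate $t_{\max}^{2r+1}/(2r+1)! = O(1/\sqrt{2r+1})$. The condition $r \ge r_\bK/2$ in the theorem statement is precisely what ensures this $\sigma$ is compatible with $\sigma \le \epsilon_\bK$; an extra factor of $\epsilon_\bK$ is needed when $\epsilon_\bK \le 1$, which accounts for the $\epsilon_\bK^3$ in the definition (\ref{addefrK}).

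To lower bound $I_0$, I would use $H_{r,a} \ge G_a$ (from the inequality $\phi_{2r}(t) \ge e^{-t}$ in Lemma \ref{lemf2rsos}) and restrict the integral to $B_\sigma(a) \cap \bK$, where $G_a(x) \ge e^{-1/2}/(2\pi\sigma^2)^{n/2}$. Assumption \ref{propeta} then gives $\vol(B_\sigma(a) \cap \bK) \ge \eta_\bK\, \sigma^n \gamma_n$, and these combine into the $\sigma$-independent lower bound $I_0 \ge \eta_\bK\, \gamma_n\, e^{-1/2}/(2\pi)^{n/2}$. For $I_1$, I would decompose it as $\int_\bK \|x-a\| G_a(x)\,dx + E_1$; the Gaussian piece is controlled by the first moment of an $n$-dimensional centered Gaussian, of order $\sigma\sqrt{n}$, while the truncation error $E_1$ is bounded using Lemma \ref{lemf2rsos} by
\[
E_1 \le \frac{\sqrt{D(\bK)}\,\vol(\bK)}{(2\pi\sigma^2)^{n/2}} \cdot \frac{t_{\max}^{2r+1}}{(2r+1)!},
\]
which, by the choice of $\sigma$, is $O(1/\sqrt{2r+1})$ times a constant depending only on $\bK$.

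Putting these estimates together gives $I_1/I_0 \le \zeta(\bK)/\sqrt{2r+1}$, proving (\ref{thmfrkeq1}). The polynomial bound (\ref{thmfrkeq2}) then follows immediately by invoking the Markov-inequality bound (\ref{ineqmf}) to replace $M_f$. The main technical obstacle, as I see it, will be bounding the truncation error $E_1$ in a manner that absorbs the $(2\pi\sigma^2)^{-n/2}$ prefactor together with the factorial suppression into a single constant depending only on $\bK$: one must couple the choice of $\sigma$ to $r$ tightly enough that $(2r+1)!$ dominates $t_{\max}^{2r+1}$ with exactly a $1/\sqrt{2r+1}$ residual, and track that all remaining $\sigma$-dependent factors cancel or are absorbed into $\zeta(\bK)$.
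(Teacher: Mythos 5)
Your overall strategy is the paper's: reduce to bounding $\psi = I_1/I_0 = c^r_{\bK,a}\int_{\bK}\|x-a\|H_{r,a}(x)dx$, lower-bound the normalizing integral via $H_{r,a}\ge G_a$ and Assumption \ref{propeta} on a small ball around $a$, split $I_1$ into a Gaussian piece of order $\sigma$ plus a Taylor truncation error, and kill the truncation error with Stirling. The reduction, the $I_0$ bound (equivalent to Lemma \ref{lemcra}), and the Gaussian-piece estimate are all sound (your route through the full-space first moment $\sigma\sqrt{n}$ is in fact slightly cleaner than the paper's Lemma \ref{propga}). But there is a genuine gap at the step you yourself flag as the ``main technical obstacle,'' and your proposed choice of $\sigma$ is precisely the borderline case where it cannot be closed. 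With $\sigma^2 = D(\bK)e/(2(2r+1))$ you get $t_{\max}=(2r+1)/e$, and Stirling gives $t_{\max}^{2r+1}/(2r+1)!\le 1/\sqrt{2\pi(2r+1)}$ with \emph{no room to spare}; meanwhile the prefactor $(2\pi\sigma^2)^{-n/2}$ in $E_1$ is of order $(2r+1)^{n/2}$ and, since $I_0$ is bounded below by a constant independent of $\sigma$, nothing cancels it. The net bound on $E_1/I_0$ is $O\bigl((2r+1)^{(n-1)/2}\bigr)$, which diverges for $n\ge 2$ rather than decaying like $1/\sqrt{2r+1}$.

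The paper resolves this by \emph{not} taking $\sigma^2$ proportional to $1/(2r+1)$: it sets $\sigma=\epsilon$ and defines $\epsilon$ by $\epsilon^{2+n/(2r+1)} = D(\bK)e/(2(2r+1))$, so that $\bigl(\epsilon^{n/(2r+1)}\bigr)^{2r+1}=\epsilon^{n}$ exactly absorbs the offending $\epsilon^{-n}$ into the $(2r+1)$-st power of the bracketed ratio, which is then made equal to $1$. The price is that $\epsilon$ exceeds $(2r+1)^{-1/2}$ by the factor $(2r+1)^{n/(4(2r+1)+2n)}$, and a separate calculus lemma (Lemma \ref{lemadd1}) is needed to show this factor is at most $6n$ uniformly in $r$, so that the Gaussian piece $\psi_1=O(\epsilon)$ still gives the $1/\sqrt{2r+1}$ rate; this modified choice of $\epsilon$ is also what produces the $\epsilon_{\bK}^{3}$ in the definition (\ref{addefrK}) of $r_{\bK}$. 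An alternative, simpler repair of your argument would be to take $t_{\max}=\delta(2r+1)/e$ for a fixed $\delta<1$ (i.e., $\sigma^2$ larger by $1/\delta$), so that $t_{\max}^{2r+1}/(2r+1)!$ gains a geometric factor $\delta^{2r+1}$ that dominates the polynomial growth $(2r+1)^{n/2}$; either way, some idea beyond your stated choice of $\sigma$ is required, and as written the proposal does not establish the theorem.
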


\medskip\noindent
{We will give the proof of Theorem \ref{thmfrk}, which has lengthy technical details,  in Section \ref{secthmpf} below. We now show how to derive Theorem \ref{thmmain}  as a direct application of Theorem \ref{thmfrk}.}

\smallskip\noindent
\begin{proof}{\em (of Theorem \ref{thmmain})}
Assume $f$ is Lipschitz continuous with Lipschitz constant $M_f$ on $K$ and  $a$ is a minimizer of $f$ over the set $\mathbf{K}$.
Using the definitions (\ref{fundr}) 
and (\ref{frk}) of the parameters and the fact that
 $H_{r,a}$ is a sum of squares with degree $4r$, it follows that
\begin{equation*}
\underline{f}^{(2r+1)}_{\mathbf{K}}\le\underline{f}^{(2r)}_{\mathbf{K}}\le f^{(r)}_{\mathbf{K},a},\ \ \text{for all $r\in\oN$}.
\end{equation*}
Then, from inequality (\ref{thmfrkeq1}) in Theorem \ref{thmfrk}, one obtains
\begin{equation*}
\underline{f}^{(2r+1)}_{\mathbf{K}}-f_{\min,\mathbf{K}}\le\underline{f}^{(2r)}_{\mathbf{K}}-f_{\min,\mathbf{K}}\le f^{(r)}_{\mathbf{K},a}-f_{\min,\mathbf{K}}\le {\zeta(\mathbf{K}) M_f\over \sqrt{2r+1}}\ \ \text{ for all  } r\ge {r_\bK\over 2}.
\end{equation*}

\smallskip\noindent
Hence, for all $r\ge {r_\bK}+1$,
\begin{eqnarray*}
\underline{f}^{(r)}_{\mathbf{K}}-f_{\min,\mathbf{K}} &\le& {\zeta(\mathbf{K}) M_f\over \sqrt{r+1}}\le {\zeta(\mathbf{K}) M_f\over \sqrt{r}} \ \ \text{for even $r$,}\\
\underline{f}^{(r)}_{\mathbf{K}}-f_{\min,\mathbf{K}} &\le& {\zeta(\mathbf{K}) M_f\over \sqrt{r}} \ \ \text{for odd $r$.}
\end{eqnarray*}

\smallskip\noindent
This concludes the proof for relation (\ref{thmmaineq1}),  and relation  (\ref{thmmaineq2}) follows  from (\ref{thmfrkeq2}) in an analogous way. This finishes the proof of Theorem \ref{thmmain}.
\qed
\end{proof}

\subsection{Analyzing the polynomial density function $H_{r,a}$}\label{secthmpf}

In this section we prove the result of Theorem \ref{thmfrk}. Recall that $a$ is a global minimizer of $f$ over $\mathbf{K}$.
For the proof, we will need the following four technical lemmas.

\begin{lemma}\label{lemcra}
Assume $\mathbf{K}\subseteq\oR^n$ is compact and satisfies Assumption \ref{propeta}.
Then, for all $0<\epsilon\le\epsilon_{\mathbf{K}}$ and $r\in\oN$, we have:
\begin{equation}\label{ineqcrad}
c^r_{\mathbf{K},a}\le C_{\mathbf{K},a}\le \frac{(2\pi\sigma^2)^{n/2}\exp\left({\epsilon^2\over 2\sigma^2}\right)}{\eta_{\mathbf{K}} \epsilon^n \gamma_n}.
\end{equation}
\end{lemma}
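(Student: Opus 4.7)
The plan is to establish the two inequalities in \eqref{ineqcrad} separately, using the pointwise bound $\phi_{2r}(t)\ge e^{-t}$ from Lemma \ref{lemf2rsos} for the first and Assumption \ref{propeta} for the second.

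For the inequality $c^r_{\mathbf{K},a}\le C_{\mathbf{K},a}$, I would apply Lemma \ref{lemf2rsos} pointwise. Substituting $t=\|x-a\|^2/(2\sigma^2)\ge 0$ into the inequality $\phi_{2r}(t)\ge e^{-t}$ and multiplying by $1/(2\pi\sigma^2)^{n/2}$ gives $H_{r,a}(x)\ge G_a(x)$ for every $x\in\oR^n$. Integrating over $\mathbf{K}$ yields $\int_{\mathbf{K}}H_{r,a}(x)dx \ge \int_{\mathbf{K}}G_a(x)dx$, and taking reciprocals (using the normalizations \eqref{cka} and \eqref{ckra}) gives the desired comparison.

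For the inequality $C_{\mathbf{K},a}\le (2\pi\sigma^2)^{n/2}\exp(\epsilon^2/(2\sigma^2))/(\eta_{\mathbf{K}}\epsilon^n\gamma_n)$, I would lower bound $\int_{\mathbf{K}}G_a(x)dx$ by restricting the integral to $B_\epsilon(a)\cap\mathbf{K}$. On this set, $\|x-a\|\le\epsilon$, so $G_a(x)\ge \frac{1}{(2\pi\sigma^2)^{n/2}}\exp(-\epsilon^2/(2\sigma^2))$. Combining this with the lower bound $\vol(B_\epsilon(a)\cap\mathbf{K})\ge\eta_{\mathbf{K}}\epsilon^n\gamma_n$ from Assumption \ref{propeta} (valid for $0<\epsilon\le\epsilon_{\mathbf{K}}$) yields
\[
\int_{\mathbf{K}}G_a(x)dx\ \ge\ \frac{\eta_{\mathbf{K}}\epsilon^n\gamma_n}{(2\pi\sigma^2)^{n/2}}\exp\!\left(-\frac{\epsilon^2}{2\sigma^2}\right),
\]
and inverting using \eqref{cka} gives the claimed upper bound on $C_{\mathbf{K},a}$.

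Neither step looks delicate: the whole lemma is essentially a packaging of Lemma \ref{lemf2rsos} (for the Taylor truncation being pointwise above the Gaussian on $[0,\infty)$) and the local density hypothesis \eqref{volballn}. The only thing to be mindful of is that both inequalities must hold simultaneously for the same admissible $\epsilon$, which is why the statement restricts to $0<\epsilon\le\epsilon_{\mathbf{K}}$; the first inequality is actually $\epsilon$-free. I expect this lemma to be used later with $\epsilon$ and $\sigma$ chosen in terms of $r$ so that $\epsilon^2/(2\sigma^2)$ stays bounded while $\sigma\to 0$, which is presumably where the main technical work in the proof of Theorem \ref{thmfrk} will lie.
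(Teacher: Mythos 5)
Your proposal is correct and matches the paper's proof essentially verbatim: the first inequality follows from $\phi_{2r}(t)\ge e^{-t}$ pointwise (hence $H_{r,a}\ge G_a$) together with the normalizations \eqref{cka} and \eqref{ckra}, and the second from restricting the integral of $G_a$ to $B_\epsilon(a)\cap\mathbf{K}$ and invoking Assumption \ref{propeta}. Your closing remark about how the lemma is later used (choosing $\epsilon$ and $\sigma$ as functions of $r$) also correctly anticipates the structure of the proof of Theorem \ref{thmfrk}.
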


\begin{proof}
By Lemma \ref{lemf2rsos}, $\phi_{2r}(t)\ge e^{-t}$ for all $t\ge 0$, which implies $H_{r,a}(x)\ge G_a(x)$ for all $x\in \oR^n$.
Together with the relations (\ref{cka}) and (\ref{ckra}) defining the constants $C_{\mathbf{K},a}$ and $c^r_{\mathbf{K},a}$, we deduce that  $c^r_{\mathbf{K},a}\le C_{\mathbf{K},a}$. Moreover, by the definition (\ref{cka}) of the constant $C_{K,a}$, one has
\begin{eqnarray*}
{1\over C_{\mathbf{K},a}}&=&\int_{\mathbf{K}}G_a(x)dx=\int_{\mathbf{K}}{1\over (2\pi \sigma^2)^{n/2}}\exp\left(-{\|x-a\|^2\over 2\sigma^2}\right)dx\\
&\ge& \int_{\mathbf{K}\cap B_{\epsilon}(a)}{1\over (2\pi \sigma^2)^{n/2}}\exp\left(-{\|x-a\|^2\over 2\sigma^2}\right)dx\\
&\ge& {1\over (2\pi \sigma^2)^{n/2}}\exp\left(-{\epsilon^2\over 2\sigma^2}\right)\vol(\mathbf{K}\cap B_\epsilon(a)).
\end{eqnarray*}

\smallskip
\noindent
We now use relation (\ref{volballn}) from Assumption \ref{propeta} in order to conclude that  $\vol(\mathbf{K}\cap B_\epsilon(a))\ge {\eta_{\mathbf{K}}}\epsilon^n\gamma_n$, which gives the desired upper bound on $C_{K,a}$.
\qed
\end{proof}

\begin{lemma}\label{lemgf}
Given $\tilde{x}\in\oR^n$ and a function $F:\oR_+\to\oR$, define the function $f:\oR^n\to\oR$ by  $f(x)=F(\|x-\tilde{x}\|)$  for all $x\in\oR^n$. Then, for all $\rho_2\ge\rho_1\ge 0$, one has
\begin{eqnarray*}
\int_{B_{\rho_2}(\tilde{x})\backslash B_{\rho_1}(\tilde{x})}f(x)dx=n\gamma_n\int^{\rho_2}_{\rho_1}z^{n-1}F(z)dz,
\end{eqnarray*}
where $\gamma_n=\frac{\pi^{(n-1)/2}2^{(n+1)/2}}{n!!}$ is the volume of the unit Euclidean ball in $\oR^n$.
\end{lemma}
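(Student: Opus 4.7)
The plan is to reduce the integral over the annular region to a one-dimensional radial integral via a change to spherical coordinates, using the fact that the integrand $f(x) = F(\|x-\tilde x\|)$ is radial about $\tilde x$.

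First, I would translate to move the center to the origin: substituting $y = x - \tilde x$ (a volume-preserving affine change of variables) transforms the annulus $B_{\rho_2}(\tilde x) \setminus B_{\rho_1}(\tilde x)$ into $B_{\rho_2}(0) \setminus B_{\rho_1}(0)$ and turns the integrand into $F(\|y\|)$.

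Next, I would invoke the standard spherical coordinate decomposition $y = z\omega$ with $z = \|y\| \in [0,\infty)$ and $\omega \in S^{n-1}$, under which the Lebesgue measure factors as $dy = z^{n-1}\, dz\, d\sigma(\omega)$, where $d\sigma$ is the surface measure on the unit sphere. Since $F(\|y\|) = F(z)$ depends only on $z$, Fubini's theorem gives
\begin{equation*}
\int_{B_{\rho_2}(0) \setminus B_{\rho_1}(0)} F(\|y\|)\, dy \;=\; \left(\int_{S^{n-1}} d\sigma(\omega)\right) \int_{\rho_1}^{\rho_2} z^{n-1} F(z)\, dz \;=\; \sigma(S^{n-1}) \int_{\rho_1}^{\rho_2} z^{n-1} F(z)\, dz.
\end{equation*}

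Finally, I would identify the surface-area constant as $\sigma(S^{n-1}) = n\gamma_n$. This is a standard identity that can be proved self-consistently by applying the same spherical decomposition to $F \equiv 1$ on $B_1(0)$: on the one hand the integral equals $\vol B_1(0) = \gamma_n$, and on the other hand the decomposition gives $\sigma(S^{n-1}) \int_0^1 z^{n-1}\, dz = \sigma(S^{n-1})/n$; equating the two yields $\sigma(S^{n-1}) = n\gamma_n$. Substituting this into the displayed equality produces the claimed formula.

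There is no real obstacle here; the result is a direct application of standard spherical coordinates. The only items requiring care are (i) noting that the change of variables $y = x-\tilde x$ preserves Lebesgue measure, so translation to a ball centered at the origin is justified, and (ii) recording the link $\sigma(S^{n-1}) = n\gamma_n$, which together with the explicit value of $\gamma_n = \pi^{(n-1)/2}2^{(n+1)/2}/n!!$ given in the statement matches the constant appearing on the right-hand side.
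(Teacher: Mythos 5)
Your proof is correct and follows essentially the same route as the paper, which simply invokes a change of variables to spherical coordinates (citing Blumenson) and, in its detailed version, identifies the angular constant as $n\gamma_n$ by the same self-consistency argument with $F\equiv 1$. No issues.
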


\begin{proof}
Apply a change of variables using spherical coordinates as explained, e.g., in  \cite{LEB60}.
\ignore{
For all $x=(x_1,\dots,x_n)\in\oR^n$ with $||x||=z$, we can use the following spherical coordinates (see e.g., \cite{LEB60}):
\begin{eqnarray*}
x_1 &=& z\cos\theta_1,\\
x_j &=& z\cos\theta_j\prod_{k=1}^{j-1}\sin\theta_k, \ \ \text{$j=2,\dots,n-2$,}\\
x_{n-1} &=& z\sin\theta_0\prod_{k=1}^{n-2}\sin\theta_k,\\
x_n &=& z\cos\theta_0\prod_{k=1}^{n-2}\sin\theta_k,
\end{eqnarray*}
where $0\le\theta_j\le\pi$ ($j=1,\dots,n-2$) and  $0\le\theta_0< 2\pi$. The Jacobian of the transformation is
\begin{equation*}
J=z^{n-1}\prod_{k=1}^{n-2}\sin^{k}\theta_{n-1-k}.
\end{equation*}
Thus, we can obtain
\begin{eqnarray*}
\int_{B_{\rho_2}(\tilde{x})\backslash B_{\rho_1}(\tilde{x})}f(x)dx&=& \int_{{\rho_1}\le \|x\|\le{\rho_2}}F(\|x\|)dx \\
&=&\int_{\rho_1}^{\rho_2}\int_{0}^{2\pi}\int_{0}^{\pi}\cdots\int_{0}^{\pi}F(z)z^{n-1}\prod_{k=1}^{n-2}\sin^{k}\theta_{n-1-k}d{\theta_1}\cdots d{\theta_{n-2}}d\theta_0 dz\\
&=& 2\pi\int_{\rho_1}^{\rho_2}F(z)z^{n-1}dz \prod_{k=1}^{n-2}\int_{0}^{\pi}\sin^k\theta_{n-1-k}d\theta_{n-1-k}.
\end{eqnarray*}
Using  the identity $\displaystyle\prod_{k=1}^{n-2}\int_{0}^{\pi}\sin^k\theta_{n-1-k}d\theta_{n-1-k}={\pi^{n/2-1}\over \Gamma(n/2)}$ (see e.g., page $66$ in \cite{LEB60}), we can conclude the proof for the equality
\begin{equation}\label{lemeq1}
\int_{B_{\rho_2}(\tilde{x})\backslash B_{\rho_1}(\tilde{x})}f(x)dx={2\pi^{n/2}\over \Gamma(n/2)}\int^{\rho_2}_{\rho_1}z^{n-1}F(z)dz.
\end{equation}
Now we verify that  ${2\pi^{n/2}\over \Gamma(n/2)}=n\gamma_n$,  by which we can conclude the proof for Lemma \ref{lemgf}. For this, we apply relation (\ref{lemeq1}), where we choose  $\rho_1=0$, $\rho_2=1$,  $f(x)=1$, and $F(z)=1$, in which case we get $$\gamma_n=\int_{B_1(0)} dx={2\pi^{n/2}\over \Gamma(n/2)}\int^{1}_{0}z^{n-1}dz={2\pi^{n/2}\over n\Gamma(n/2)}.$$
}
\qed
\end{proof}

\smallskip\noindent
\begin{lemma}\label{lemadd1}
For all positive integers $r$ and $n$, one has $\left({1\over 2r+1}\right)^{-{n\over 4(2r+1)+2n}}< 6n$.
\end{lemma}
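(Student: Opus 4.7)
The plan is to set $m := 2r+1$ (so $m\ge 3$), take logarithms, and reduce the claim $m^{n/(4m+2n)}<6n$ to the equivalent linear inequality
\[
n\ln m \;<\; (4m+2n)\ln(6n).
\]
The starting observation is the trivial bound $n/(4m+2n)\le 1/2$, which gives $m^{n/(4m+2n)}\le\sqrt m$. This is strictly less than $6n$ exactly when $m<36n^2$, which suggests splitting into two cases around the threshold $m=36n^2$.

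In the easy case $m\le 36n^2$, I would use that $\ln m\le \ln(36n^2)=2\ln(6n)$, so
\[
n\ln m \;\le\; 2n\ln(6n) \;<\; (4m+2n)\ln(6n),
\]
where the strict inequality just uses $4m>0$ and $\ln(6n)\ge\ln 6>0$. This step is routine.

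In the case $m>36n^2$, I would substitute $m=36n^2 s$ with $s>1$, so that $\ln m=2\ln(6n)+\ln s$ and the desired inequality becomes
\[
n\ln s \;<\; 4m\ln(6n) \;=\; 144\,n^2 s\,\ln(6n).
\]
Now I use $\ln s < s$ (valid for all $s>0$, hence certainly for $s>1$) together with $144\,n\ln(6n)\ge 144\ln 6>1$ for every integer $n\ge 1$, to chain
\[
n\ln s \;<\; ns \;<\; 144\,n\ln(6n)\cdot ns \;=\; 144\,n^2 s\,\ln(6n).
\]
The only non-automatic step is choosing the threshold $36n^2$, which is dictated by the crude $\sqrt m$ bound above; once the case split is made, both branches reduce to elementary one-variable inequalities, so I do not anticipate any substantive obstacle.
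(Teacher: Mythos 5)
Your proof is correct, and it takes a genuinely different route from the paper's. The paper fixes $n$, treats $g(r)=(2r+1)^{n/(4(2r+1)+2n)}$ as a differentiable function of a real variable $r\ge 0$, notes $g(0)=1$ and $g(r)\to 1$ as $r\to\infty$, and then bounds $g$ at its stationary points: differentiating $\ln g$ shows any stationary point satisfies $(2r^*+1)\left[\ln(2r^*+1)-1\right]=n/2$, from which $2r^*+1<6n$ and hence $g(r^*)\le 2r^*+1<6n$. Your argument instead passes to logarithms and splits on whether $m=2r+1$ exceeds the threshold $36n^2$ (naturally suggested by the crude bound $m^{n/(4m+2n)}\le\sqrt m$), handling the first case by $\ln m\le 2\ln(6n)$ and the second by the elementary inequality $\ln s<s$ together with $144n\ln(6n)>1$. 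All the steps check out: the reduction to $n\ln m<(4m+2n)\ln(6n)$ is a valid equivalence since $\ln(6n)>0$ and $4m+2n>0$, and both branches close with strict inequalities. What your approach buys is that it is entirely elementary --- no calculus, no analysis of stationary points, and no boundary/limit considerations --- at the cost of an explicit case split; the paper's calculus argument is shorter to state but relies on locating where the maximum of $g$ can occur. Both arguments produce the same constant $6n$.
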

\begin{proof}
Let $n \in \mathbb{N}$ be given.
Denote $$g(r):=\left({1\over 2r+1}\right)^{-{n\over 4(2r+1)+2n}}=\left( 2r+1 \right)^{{n\over 4(2r+1)+2n}} \quad (r \ge 0).$$
Observe that, $g(0) = 1$, $g(r)>0$ for all $r \ge 0$, $\ln(g(r))={n\over 8r+4+2n}\ln(2r+1)$, and thus $\lim_{r\rightarrow \infty} g(r)=1$. It suffices to show $g(r^*)< 6n$ for all stationary points $r^*$.
Since
\begin{eqnarray*}
{d\ln(g(r))\over dr}={-8n\ln(2r+1)\over (8r+4+2n)^2}+{2n\over (2r+1)(8r+4+2n)},
\end{eqnarray*}
and $g'(r) = \frac{1}{g(r)}{d\ln(g(r))\over dr}$, any stationary point $r^*$ satisfies
\begin{eqnarray*}
{d\ln(g(r^*))\over dr} = 0 \Longleftrightarrow (2r^*+1)\left[ \ln(2r^*+1)-1 \right]={n\over 2}.
\end{eqnarray*}
Since
\begin{eqnarray*}
(2r^*+1)(\ln(3)-1)\le (2r^*+1)\left[ \ln(2r^*+1)-1 \right]={n\over 2},
\end{eqnarray*}
one has $2r^*+1 \le {n\over 2 (\ln(3)-1)}<6n$.
Since $g(r) \le 2r+1$ for all $r\ge 0$, one has $g(r^*)\le 2r^*+1 < 6n.$
\qed
\end{proof}

\smallskip\noindent
\begin{lemma}\label{propga}
Assume $\mathbf{K}\subseteq\oR^n$ is compact and satisfies Assumption \ref{propeta}.
Then, for all $0< \epsilon \le \epsilon_{\mathbf{K}}$, one has
\begin{equation*}\label{thmfrkeq3}
\int_{\mathbf{K}}C_{\mathbf{K},a}\|x-a\|G_a(x)dx \le \epsilon+{n\sigma^{n+1}p(n)\over \epsilon^n\eta_{\mathbf{K}}}e^{{\epsilon^2\over 2\sigma^2}},
\end{equation*}
where $p(n):=\int_0^{+\infty} t^n e^{-t^2/2}dt$ is a constant depending on $n$, given by
\begin{eqnarray}\label{constantpn}
p(n) = \left\{ \begin{array}{ll}
1 & \textrm{if $n=1$,}\\
\sqrt{{\pi\over 2}}\prod_{j=1}^k\left(2j-1\right) & \textrm{if $n=2k$ and $k\ge1$,}\\
\prod_{j=1}^k\left(2j\right) & \textrm{if $n=2k+1$ and $k\ge1$.}
\end{array} \right.
\end{eqnarray}

\end{lemma}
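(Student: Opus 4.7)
The plan is to split the integral at the boundary of the ball $B_\epsilon(a)$, so write
$\int_{\mathbf{K}} C_{\mathbf{K},a}\|x-a\|G_a(x)dx = I_{\mathrm{in}}+I_{\mathrm{out}}$,
where $I_{\mathrm{in}}$ integrates over $\mathbf{K}\cap B_\epsilon(a)$ and $I_{\mathrm{out}}$ over $\mathbf{K}\setminus B_\epsilon(a)$. On the inner piece the factor $\|x-a\|$ is bounded by $\epsilon$, and since $\int_{\mathbf{K}}C_{\mathbf{K},a}G_a(x)dx=1$ by the normalization (\ref{cka}), one immediately gets $I_{\mathrm{in}}\le \epsilon$, giving the first summand in the stated bound.

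For the outer piece the strategy is: first replace $C_{\mathbf{K},a}$ by the upper estimate $(2\pi\sigma^2)^{n/2}e^{\epsilon^2/(2\sigma^2)}/(\eta_{\mathbf{K}}\epsilon^n\gamma_n)$ furnished by Lemma \ref{lemcra}; then enlarge the domain from $\mathbf{K}\setminus B_\epsilon(a)$ to $\oR^n\setminus B_\epsilon(a)$ since the integrand is nonnegative; then compute the resulting radially-symmetric integral via Lemma \ref{lemgf}, applied with $F(z)=z\exp(-z^2/(2\sigma^2))$, $\rho_1=\epsilon$ and $\rho_2=+\infty$. This converts $I_{\mathrm{out}}$ into $n\gamma_n(2\pi\sigma^2)^{-n/2}\int_\epsilon^\infty z^n\exp(-z^2/(2\sigma^2))dz$ multiplied by the Lemma \ref{lemcra} bound on $C_{\mathbf{K},a}$.

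A change of variable $t=z/\sigma$ turns the remaining univariate integral into $\sigma^{n+1}\int_{\epsilon/\sigma}^\infty t^n e^{-t^2/2}dt$, which is at most $\sigma^{n+1}p(n)$ by the very definition of $p(n)$. Substituting everything back, the factors $(2\pi\sigma^2)^{n/2}$ and $\gamma_n$ cancel cleanly, leaving $I_{\mathrm{out}}\le n\sigma^{n+1}p(n)\,e^{\epsilon^2/(2\sigma^2)}/(\eta_{\mathbf{K}}\epsilon^n)$, which is exactly the second summand in the desired inequality. The main obstacle is bookkeeping for the three multiplicative constants coming from the Gaussian normalization, from Lemma \ref{lemcra}, and from the spherical-coordinate formula; once the cancellations are tracked carefully the estimate drops out.

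For the closed form (\ref{constantpn}) of $p(n)=\int_0^\infty t^n e^{-t^2/2}dt$, I would proceed by induction on $n$ using integration by parts with $u=t^{n-1}$ and $dv = te^{-t^2/2}dt$, which yields the recursion $p(n)=(n-1)p(n-2)$ for $n\ge 2$. The base cases $p(0)=\sqrt{\pi/2}$ (standard Gaussian integral) and $p(1)=1$ (immediate) then propagate to give the even/odd formulas displayed, completing the statement of the lemma.
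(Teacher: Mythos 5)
Your proposal is correct and follows essentially the same route as the paper: the same split at $\partial B_\epsilon(a)$, the bound $I_{\mathrm{in}}\le\epsilon$ from the normalization, and for the outer piece the combination of Lemma \ref{lemcra}, the spherical-coordinates Lemma \ref{lemgf}, the substitution $t=z/\sigma$, and the extension of the univariate integral to $[0,+\infty)$ to recognize $p(n)$ (the paper first enlarges only to $B_{\sqrt{D(\mathbf{K})}}(a)$ and extends the limits after the change of variable, but this is immaterial). The integration-by-parts recursion $p(n)=(n-1)p(n-2)$ is exactly the induction the paper alludes to for \eqref{constantpn}.
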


\begin{proof}
Let  $\varphi:=\int_{\mathbf{K}}C_{\mathbf{K},a}\|x-a\|G_a(x)dx$ denote the integral that we need to upper bound.
We split the integral $\varphi$ as $\varphi=\varphi_1+\varphi_2$, depending on whether $x$ lies in the ball $B_\epsilon(a)$ or not.

\smallskip
\noindent
First, we upper bound the term $\varphi_1$ as
$$\varphi_1:=\int_{\mathbf{K}\cap B_\epsilon(a)} \|x-a\| C_{\mathbf{K},a} G_a(x)dx \le \epsilon \int_{\mathbf{K}\cap B_\epsilon(a)}  C_{\mathbf{K},a} G_a(x)dx
\le \epsilon \int_{\mathbf{K}}  C_{\mathbf{K},a} G_a(x)dx =\epsilon.$$
Second, we bound the integral
$$\varphi_2:=C_{\mathbf{K},a}\int_{\mathbf{K}\setminus B_\epsilon(a)} \|x-a\| G_a(x)dx.$$
Since $\mathbf{K}\subseteq B_{\sqrt{D(\mathbf{K})}}(a)$, one has
\begin{equation*}
\varphi_2\le C_{\mathbf{K},a}\int_{B_{\sqrt{D(\mathbf{K})}}(a)\setminus B_\epsilon(a)} \|x-a\| G_a(x)dx,
\end{equation*}
where the right hand side, by Lemma \ref{lemgf}, is equal to
$${C_{\mathbf{K},a}n\gamma_n\over (2\pi\sigma^2)^{n/2}}\int_{\epsilon}^{\sqrt{D(\mathbf{K})}}z^n\exp\left(-{z^2\over 2\sigma^2}\right) dz.$$
By a change of variable $t={z\over\sigma}$, one obtains
$$\varphi_2 \le {C_{\mathbf{K},a}n\gamma_n\sigma\over (2\pi)^{n/2}}\int_{\epsilon/\sigma}^{\sqrt{D(\mathbf{K})}/\sigma}t^n \exp\left(-{t^2\over 2}\right) dt,$$
and thus
\begin{eqnarray*}\label{eqphi2}
\varphi_2 \le {C_{\mathbf{K},a}n\gamma_n\sigma\over (2\pi)^{n/2}}\int_{0}^{+\infty}t^n \exp\left(-{t^2\over 2}\right) dt={C_{\mathbf{K},a}n\gamma_n\sigma \over (2\pi)^{n/2}}p(n).
\end{eqnarray*}
Here we have set  $p(n):=\int_{0}^{+\infty}t^n e^{-{t^2\over 2}}dt$ which can be checked to be given by  (\ref{constantpn}) (e.g., using  induction on $n$).
\smallskip\noindent
Now, combining with the upper bound for $C_{\mathbf{K},a}$ from (\ref{ineqcrad}), we obtain
\begin{eqnarray*}
\varphi_2\le {n\sigma^{n+1}p(n)\over \epsilon^n\eta_{\mathbf{K}}}e^{{\epsilon^2\over 2\sigma^2}}.
\end{eqnarray*}

\smallskip\noindent
Therefore, we have shown:
$$\varphi=\varphi_1+\varphi_2\le\epsilon+{n\sigma^{n+1}p(n)\over \epsilon^n\eta_{\mathbf{K}}}e^{{\epsilon^2\over 2\sigma^2}},$$
which shows the lemma. \qed
\end{proof}

\smallskip\noindent
We are now ready to prove Theorem \ref{thmfrk}.

\smallskip
\begin{proof}{\em (of Theorem \ref{thmfrk})}
Observe that, if $f$ is a polynomial, then we can use the upper bound (\ref{ineqmf}) for its Lipschitz constant and thus the inequality  (\ref{thmfrkeq2}) follows as a direct consequence of the inequality (\ref{thmfrkeq1}).
Therefore, it suffices to show the relation (\ref{thmfrkeq1}).

\smallskip
\noindent
Recall that $a$ is a minimizer of $f$ over $\mathbf{K}$. 
As $f$ is Lipschitz continuous with Lipschitz constant $M_f$ on $K$, we have
$$f(x)-f(a)\le M_f\|x-a\| \ \ \forall x\in \mathbf{K}.$$
This implies
$$f^{(r)}_{\mathbf{K},a}-f_{\min,\mathbf{K}}=\int_{\mathbf{K}}c^r_{\mathbf{K},a}{H_{r,a}}(x)(f(x)-f(a))dx \le M_f\int_{\mathbf{K}} \|x-a\| c^r_{\mathbf{K},a}H_{r,a}(x)dx.$$

\smallskip
\noindent
Our objective is now to show the existence of a constant $\zeta(\mathbf{K})$ such that
\begin{equation*}
\psi:=\int_{\mathbf{K}} c^r_{\mathbf{K},a}\|x-a\| H_{r,a}(x)dx\le {\zeta(\mathbf{K})\over \sqrt{2r+1}}, \ \ \text{for all $r\ge r_{\bK}$, (see (\ref{addefrK}))}
\end{equation*}
by which we can then conclude the proof for (\ref{thmfrkeq1}).

\smallskip\noindent
For this, we split the integral $\psi$ as the sum of two terms:

\begin{eqnarray*}
\psi&=&\underbrace{\int_{\mathbf{K}} c^r_{\mathbf{K},a}\|x-a\| G_a(x)dx}_{=:\psi_1}+\underbrace{\int_{\mathbf{K}} c^r_{\mathbf{K},a}\|x-a\| (H_{r,a}(x)-G_a(x))dx.}_{=:\psi_2}
\end{eqnarray*}

\smallskip
\noindent
First, we upper bound the term $\psi_1$. As $c^r_{\mathbf{K},a}\le C_{\mathbf{K},a}$ (by (\ref{ineqcrad})), we can use Lemma \ref{propga} to conclude that, for all $0<\epsilon\le \epsilon_{\mathbf{K}}$,
\begin{equation}\label{psi1}
\psi_1\le \int_{\mathbf{K}}C_{\mathbf{K},a}\|x-a\|G_a(x)dx \le \epsilon+{n\sigma^{n+1}p(n)\over \epsilon^n\eta_{\mathbf{K}}}e^{{\epsilon^2\over 2\sigma^2}}=\epsilon\underbrace{\left[1+{n\sigma^{n+1}p(n)\over \epsilon^{n+1}\eta_{\mathbf{K}}}e^{{\epsilon^2\over 2\sigma^2}}\right]}_{=:\mu_1}=\epsilon \mu_1.
\end{equation}

\vspace{0.5cm}\noindent
Second we bound the integral $$\psi_2=\int_{\mathbf{K}} c^r_{\mathbf{K},a}\|x-a\| (H_{r,a}(x)-G_a(x))dx.$$

\smallskip\noindent
We can upper bound the function $H_{r,a}(x)-G_a(x)$ using the estimate from (\ref{fmf2r}) and we get
\begin{eqnarray*}
H_{r,a}(x)-G_a(x)&\le& {{1\over (2\pi\sigma^2)^{n/2}}} {\|x-a\|^{4r+2}\over (2\sigma^2)^{2r+1}(2r+1)!}.
\end{eqnarray*}

\smallskip
\noindent
Then we have
$$\psi_2\le {{1\over (2\pi\sigma^2)^{n/2}}}\int_{\mathbf{K}}c^r_{\mathbf{K},a} {\|x-a\|^{4r+3}\over (2\sigma^2)^{2r+1}(2r+1)!}dx= {{1\over (2\pi\sigma^2)^{n/2}}}{c^r_{\mathbf{K},a}\over (2\sigma^2)^{2r+1}(2r+1)!}\int_{\mathbf{K}}\|x-a\|^{4r+3}dx.$$

\smallskip\noindent
Now we upper bound the integral $\int_{\mathbf{K}}\|x-a\|^{4r+3}dx$.
Since $\mathbf{K}\subseteq B_{\sqrt{D(\mathbf{K})}}(a)$, one has
\begin{equation*}
\int_{\mathbf{K}}\|x-a\|^{4r+3}dx \le \int_{B_{\sqrt{D(\mathbf{K})}}(a)} \|x-a\|^{4r+3}dx,
\end{equation*}
where the right hand side, by Lemma \ref{lemgf}, is equal to
$$n\gamma_n\int_{0}^{\sqrt{D(\mathbf{K})}}z^{4r+n+2} dz={n\gamma_nD(\mathbf{K})^{{4r+n+3\over 2}}\over 4r+n+3} \le n\gamma_nD(\mathbf{K})^{{4r+n+3\over 2}}.$$

\smallskip\noindent
Thus, we obtain
$$\psi_2\le {{1\over (2\pi\sigma^2)^{n/2}}}{c^r_{\mathbf{K},a}\over (2\sigma^2)^{2r+1}(2r+1)!}n\gamma_nD(\mathbf{K})^{{4r+n+3\over 2}}.$$

\smallskip\noindent
We now use the upper bound for $c^r_{\mathbf{K},a}$ from (\ref{ineqcrad}):
\begin{equation*}
c^r_{\mathbf{K},a}\le \frac{(2\pi\sigma^2)^{n/2}\exp\left({\epsilon^2\over 2\sigma^2}\right)}{\eta_{\mathbf{K}} \epsilon^n \gamma_n}
\end{equation*}
and we obtain
$$\psi_2\le {n\exp\left(\epsilon^2\over2\sigma^2\right)D(\mathbf{K})^{{4r+n+3\over 2}}\over \eta_{\mathbf{K}}\epsilon^n(2r+1)!(2\sigma^2)^{2r+1}}.$$

\smallskip
\noindent
Finally we use the Stirling's inequality:
$$(2r+1)!\ge \sqrt{2\pi(2r+1)}\left({2r+1}\over e\right)^{2r+1},$$
and obtain
\begin{eqnarray}\label{psi2}
\psi_2&\le& \underbrace{{n\exp\left(\epsilon^2\over2\sigma^2\right)D(\mathbf{K})^{{n+1\over 2}}\over \eta_{\mathbf{K}}}}_{=:\mu_2}\left( D(\mathbf{K})e\over 2\sigma^2{\epsilon^{n/(2r+1)}}(2r+1) \right)^{2r+1}{1\over \sqrt{2\pi(2r+1)}}\\
&=& {\mu_2\over\sqrt{2\pi(2r+1)}}\left( {D(\mathbf{K})e\over 2\sigma^2{\epsilon^{n/(2r+1)}}(2r+1)} \right)^{2r+1}.\nonumber
\end{eqnarray}

\smallskip
\noindent
We can now upper bound the quantity $\psi=\psi_1+\psi_2$, by combining the upper bound for $\psi_1$ in (\ref{psi1}) with the above upper bound (\ref{psi2}) for $\psi_2$. That is,

\begin{equation*}
\psi\le \epsilon\mu_1+{\mu_2\over\sqrt{2\pi(2r+1)}}\left( {D(\mathbf{K})e\over 2\sigma^2{\epsilon^{n/(2r+1)}}(2r+1)} \right)^{2r+1}.
\end{equation*}

\smallskip\noindent
We now indicate how to select the parameters  $\epsilon$ and $\sigma$.

\smallskip\noindent
First we select $\sigma=\epsilon$, so that both parameters $\mu_1$ and $\mu_2$ appearing in (\ref{psi1}) and (\ref{psi2}) are constants depending on $n$ and $\mathbf{K}$, namely

$$\mu_1=1+{np(n)e^{{1/2}}\over \eta_{\mathbf{K}}} \ \
\text{and}\ \ \mu_2={ne^{{1/2}}D(\mathbf{K})^{{n+1\over 2}}\over \eta_{\mathbf{K}}}.$$

\smallskip\noindent
Next we select $\epsilon$ so that ${D(\mathbf{K})e\over 2{\epsilon^{2+n/(2r+1)}}(2r+1)}=1$, i.e.,
$${ \epsilon=\left({D(\mathbf{K})e\over 2(2r+1)}\right)^{2r+1\over 2(2r+1)+n}=\left({D(\mathbf{K})e\over 2}\right)^{2r+1\over 2(2r+1)+n}\left({1\over 2r+1}\right)^{{1\over2}-{n\over 4(2r+1)+2n}} .}$$

\smallskip\noindent
Summarizing, we have shown that
{\begin{eqnarray}
\psi&\le& \left({1\over 2r+1}\right)^{{1\over2}-{n\over 4(2r+1)+2n}} \left[\left({D(\mathbf{K})e\over 2}\right)^{2r+1\over 2(2r+1)+n}\mu_1+{\mu_2\over \sqrt{2\pi}}\left({1\over 2r+1}\right)^{n\over 4(2r+1)+2n}\right]\nonumber\\
&\le& \left({1\over 2r+1}\right)^{{1\over2}} 6n\left(\mu_1\max\left\{ 1,\sqrt{D(\mathbf{K})e\over 2} \right\}+{\mu_2\over \sqrt{2\pi}}\right). \label{aadre1}
\end{eqnarray}}

\noindent To obtain the last inequality (\ref{aadre1}), we use the inequality $\left({1\over 2r+1}\right)^{-{n\over 4(2r+1)+2n}}< 6n$ (recall Lemma \ref{lemadd1}), together with the two inequalities $\left({D(\mathbf{K})e\over 2}\right)^{2r+1\over 2(2r+1)+n}\le \max\left\{ 1,\sqrt{D(\mathbf{K})e\over 2} \right\}$ and $\left({1\over 2r+1}\right)^{n\over 4(2r+1)+2n}\le1$.

\smallskip
\noindent
Since we have assumed $\epsilon\le \epsilon_{\mathbf{K}}$ (recall Lemma \ref{lemcra}), this implies the condition ${r\ge{D(\mathbf{K})e\over 4}{\epsilon_{\bK}^{-\left(2+{n\over 2r+1}\right)}}-{1\over 2}}$, i.e., the inequality (\ref{aadre1}) holds for all ${r\ge{D(\mathbf{K})e\over 4}{\epsilon_{\bK}^{-\left(2+{n\over 2r+1}\right)}}-{1\over 2}}$.
If $\epsilon_\bK\le 1$ and $r\ge n/2$, then we have
$\epsilon_\bK^{-(2+{n\over 2r+1})} \le \epsilon_{\bK}^{-3}$ and thus the inequality
(\ref{aadre1}) holds for all
${r\ge \max\left\{ {D(\mathbf{K})e\over 4\epsilon_{\bK}^{3}}, {n\over 2}\right\}}$.
If $\epsilon_\bK\ge 1$ then $\epsilon_\bK^{-(2+{n\over 2r+1})} \le 1$ and thus (\ref{aadre1})  holds for all integers $r\ge {D(\bK)e\over 4}$.
Hence, the inequality (\ref{aadre1}) holds for all $r\ge {r_{\bK}/ 2}$, where $r_\bK$ is as defined in (\ref{addefrK}).

\smallskip
\noindent
Finally, by defining the constant
{\begin{equation*}\label{eqzetak}
\zeta(\mathbf{K}):=6n\left(\mu_1\max\left\{ 1,\sqrt{D(\mathbf{K})e\over 2} \right\}+{\mu_2\over \sqrt{2\pi}}\right),
\end{equation*}}

\smallskip
\noindent
which indeed depends only on $\bK$ and its dimension $n$,
we can conclude  the proof for (\ref{thmfrkeq1}).
\qed
\end{proof}

\begin{remark}\label{5rkminasp}
Note that in the proof of Theorem \ref{thmfrk}, we  use Assumption \ref{propeta} only for the selected minimizer $a\in\mathbf{K}$ (and we use it only in the proof of Lemma \ref{lemcra}). Hence, if the selected point $a$ lies in the interior of $\mathbf{K}$, i.e.,  if there exists $\delta>0$ such that $B_\delta(a)\subseteq\mathbf{K}$, then the result of Theorem \ref{thmfrk} (and thus Theorem \ref{thmmain}) holds when selecting $\eta_{\mathbf{K}}=1$ and $\epsilon_{\mathbf{K}}=\delta$.

\smallskip
\noindent
Our results extend also to unconstrained global minimization:
$$f^*:=\min_{x\in\oR^n}f(x),$$

\smallskip\noindent
if we know that $f$ has a global minimizer $a$ and we know a ball $B_\delta(0)$ containing $a$. We can then indeed minimize $f$ over a compact set $K$, which can be chosen to be the ball $B_\delta(0)$ or a suitable hypercube containing $a$.

\ignore{
Suppose $f$ is a Lipschitz continuous function (with respect to the constant $M_f$) on $\mathbf{K}\subseteq \oR^n$ and $a$ is one minimizer of $f$ over $\mathbf{K}$. If there exists $\delta>0$ such that $B_\delta(a)\subseteq\mathbf{K}$, then, by analogous arguments for Theorem \ref{thmfrk}, we can show that
\begin{equation}\label{inequncon}
f_{\mathbf{K},a}^{(r)}-f_{\min,\mathbf{K}}\le {M_f\overline{\zeta}(\mathbf{K})\over \sqrt{r}},\ \ \text{for all $r\ge{D(\mathbf{K})e\over 4\delta^2}$,}
\end{equation}
where $$\overline{\zeta}(\mathbf{K}):=\sqrt{D(\mathbf{K})e\over 4}\left( 1+np(n)e^{1/2} \right)+{ n(2\pi)^{n/2}e^{1/2}D(\mathbf{K})^{n+1\over 2} \over \sqrt{4\pi}}.$$

\smallskip\noindent
It is not difficult to see that in this case we do not need Assumption \ref{propeta} (for $\mathbf{K}$). More precisely, in order to prove (\ref{inequncon}), one can follow the proof for Theorem \ref{thmfrk} by setting $\eta_{\mathbf{K}}=1$ and $\epsilon_{\mathbf{K}}=\delta$.

\noindent
Now suppose $f$ is a Lipschitz continuous function (with respect to the constant $M_f$) on $\oR^n$.  Then, in this case, (\ref{inequncon}) implies

\begin{equation*}
f_{\mathbf{K},x^*}^{(r)}-f^*\le {M_f\overline{\zeta}(\mathbf{K})\over \sqrt{r}},\ \ \text{for all $r\ge{D(\mathbf{K})e\over 4\delta^2}$.}
\end{equation*}
}
\end{remark}

\section{Obtaining feasible solutions through sampling}\label{secgenerate}

In this section we indicate how to sample feasible points in the set $\bK$ from the optimal density function obtained by solving the semidefinite program (\ref{fundr}). 

\smallskip\noindent
Let $f\in\oR[x]$ be a polynomial.
Suppose $h^*(x)\in\Sigma[x]_r$ is an optimal solution of the program (\ref{fundr}), i.e., $\underline{f}^{(r)}_{\mathbf{K}}=\int_{\mathbf{K}} f(x)h^*(x)dx$ and $\int_{\bK}h^*(x)dx=1$.
\smallskip\noindent
Then $h^*$ can be seen as the probability density function of a probability distribution on $\bK$, denoted as $\mathcal{T}_{\mathbf{K}}$ and,
for all random vector $X=(X_1,\ldots,X_n)\sim\mathcal{T}_{\mathbf{K}}$, the expectation of $f(X)$ is given by:
\begin{equation}\label{expectation}
\mathbb{E}\left[f(X)\right]=\int_{\mathbf{K}} f(x)h^*(x)dx=\underline{f}^{(r)}_{\mathbf{K}}.
\end{equation}

\smallskip\noindent
As we now recall one can generate  random samples $x\in \bK$  from the distribution $\mathcal{T}_{\mathbf{K}}$ using the well known {\em method of conditional distributions} (see e.g., \cite[Section 8.5.1]{law07}). Then we will observe that with high probability one of these sample points satisfies (roughly) the inequality $f(x)\le\underline{f}^{(r)}_{\mathbf{K}}$ (see Theorem \ref{thmpb} for details). 

\smallskip
\noindent
In order to sample a random vector $X=(X_1,\ldots,X_n) \sim \mathcal{T}_{\mathbf{K}}$,
we assume that, for each $i=2,\ldots,n$, we know the cumulative conditional distribution of $X_i$ given that $X_j=x_j$ for $j=1,\ldots,i-1$,  defined in terms of
probabilities as
\[
F_i(x_i \mid x_1,\ldots,x_{i-1}) := \mathbf{Pr}\left[X_i \le x_i \; \mid \; X_1 = x_1,\ldots, X_{i-1}= x_{i-1}\right].
\]
Additionally, we assume that we know the  cumulative marginal distribution function of $X_i$, defined as:
\[
F_i(x_i)   := \mathbf{Pr}\left[X_i \le x_i\right].
\]
 Then one can generate a  random sample
$x=(x_1,\ldots,x_n) \in \bK$ from the distribution $\mathcal{T}_{\mathbf{K}}$ by the following algorithm:


\begin{description}
\item[$\bullet$] Generate $x_1$ with cumulative distribution function $F_1(\cdot)$.
\smallskip 
\item[$\bullet$] Generate $x_2$ with cumulative distribution function $F_2\left(\cdot|x_1\right).$ \\
\vdots
\smallskip
\item[$\bullet$] Generate $x_n$ with cumulative distribution function $F_n\left(\cdot|x_1,\dots,x_{n-1}\right).$
\end{description}
Then return $x=(x_1,x_2,\dots,x_n)^T$.

\smallskip\noindent
There remains to explain how to generate a (univariate) sample point $x$ with a given cumulative distribution function $F(\cdot)$, since this operation is carried out at each of the $n$ steps of the above algorithm.
For this  one can use the classical  {\em inverse-transform method} (see e.g., \cite[Section 8.2.1]{law07}), which reduces to sampling from the uniform distribution on $[0,1]$ and can be  described as follows:

\begin{description}
\item[$\bullet$] Generate a sample $u$ from the uniform distribution over $[0,1]$.
\smallskip\noindent
\item[$\bullet$] Return $x=F^{-1}(u)$ (if $F$ is strictly monotone increasing, or $x=\min\{y: F(y)\ge u\}$ otherwise).
\end{description}
{
Hence, in order to be able to apply the method of conditional distributions for sampling from $\mathbf K$ we need to solve the equation $x=F^{-1}(u)$.
For instance, when $F(\cdot)$ is a univariate polynomial, solving the equation $x=F^{-1}(u)$ reduces to computing the eigenvalues of the corresponding companion matrix (see, e.g., \cite[Section 2.4.1]{ML09}). This applies, e.g., when $\mathbf K$ is the hypercube or the simplex, as we see below. }

\bigskip\noindent
As an illustration, we first indicate how to compute the cumulative marginal and conditional distributions $F_i(\cdot)$ and $F_i(\cdot\mid x_1\ldots x_{i-1})$ for the case of the hypercube $\mathbf K=\oQ_n=[0,1]^n$.
As before we are given a sum of squares density function $h^*(x)$ on  $[0,1]^n$.
 For $i=1,\ldots,n$, define the  polynomial function $f_{1\ldots i} \in \oR[x_1,\ldots,x_i]$ by
\begin{equation}\label{relfi}
f_{1 \ldots i}(x_1,\ldots,x_i)=\int_0^1 \cdots \int_0^1 h^*(x_1,\ldots,x_n) dx_{i+1}\cdots dx_n.
\end{equation}
Then the cumulative marginal distribution function $F_1(\cdot)$ is given by
$$F_1(x_1)= \int_0^{x_1} f_1(y)dy$$
and, for $i=2,\ldots,n$, the  cumulative conditional distribution function $F_i(\cdot\mid x_1\ldots x_{i-1})$ is given by
$$F_i(x_i\mid x_1\ldots x_{i-1}) ={ \int_0^{x_i} f_{1\ldots i}(x_1,\ldots,x_{i-1},y) dy \over f_{1\ldots (i-1)}(x_1,\ldots,x_{i-1})}.$$
{
The computation of the cumulative marginal and conditional distributions can be carried out in the same way  for the simplex  $\mathbf K=\Delta_n$, after replacing the function $f_{1\ldots i} \in \oR[x_1,\ldots,x_i]$
in  (\ref{relfi}) by
$$f_{1 \ldots i}(x_1,\ldots,x_i)=\int_0^{1-x_{i}-x_{i+1}-\cdots-x_{n-1}}\int_0^{1-x_{i}-\cdots-x_{n-2}} \cdots \int_0^{1-x_i} h^*(x_1,\ldots,x_n) dx_{i+1}\cdots dx_n.$$
Note that in both cases the functions $F_i(x_i\mid x_1\ldots x_{i-1})$ are indeed univariate polynomials. We will  apply this sampling method to several examples of  polynomial minimization over the hypercube and the simplex  in the next section.}

\medskip\noindent
We now  observe that if we generate sufficiently many samples from the distribution $\mathcal{T}_{\mathbf{K}}$ then, with high probability, one of these samples is a point  $x\in\mathbf{K}$ satisfying (roughly)  $f(x)\le\underline{f}^{(r)}_{\mathbf{K}}$.

%

\begin{theorem}\label{thmpb}
Let $X \sim \mathcal{T}_{\mathbf{K}}$.
For all $\epsilon>0$,
\[
 \mathbf{Pr}\left[
 f(X) \ge  \underline{f}^{(r)}_{\mathbf{K}}+\epsilon\left(\underline{f}_{\mathbf{K}}^{(r)}-f_{\min,\mathbf{K}}\right)\right] \le \frac{1}{1+\epsilon}.
 \]
\end{theorem}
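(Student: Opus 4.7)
The plan is to recognize this as a straightforward application of Markov's inequality to a suitably shifted random variable. The key observation is that since $X \sim \mathcal{T}_{\mathbf{K}}$ takes values in $\mathbf{K}$, we have $f(X) \ge f_{\min,\mathbf{K}}$ almost surely, so the random variable $Y := f(X) - f_{\min,\mathbf{K}}$ is nonnegative. Moreover, by the expectation identity (\ref{expectation}) one has
\[
\mathbb{E}[Y] = \mathbb{E}[f(X)] - f_{\min,\mathbf{K}} = \underline{f}^{(r)}_{\mathbf{K}} - f_{\min,\mathbf{K}}.
\]

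The second step is to rewrite the event of interest in terms of $Y$. The inequality $f(X) \ge \underline{f}^{(r)}_{\mathbf{K}} + \epsilon(\underline{f}^{(r)}_{\mathbf{K}} - f_{\min,\mathbf{K}})$ is equivalent, after subtracting $f_{\min,\mathbf{K}}$ from both sides, to $Y \ge (1+\epsilon)(\underline{f}^{(r)}_{\mathbf{K}} - f_{\min,\mathbf{K}}) = (1+\epsilon)\mathbb{E}[Y]$. Then Markov's inequality applied to the nonnegative variable $Y$ yields
\[
\mathbf{Pr}\bigl[Y \ge (1+\epsilon)\mathbb{E}[Y]\bigr] \le \frac{\mathbb{E}[Y]}{(1+\epsilon)\mathbb{E}[Y]} = \frac{1}{1+\epsilon},
\]
which is exactly the claimed bound.

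There is no real obstacle here; the only point to double-check is the degenerate case $\underline{f}^{(r)}_{\mathbf{K}} = f_{\min,\mathbf{K}}$ (i.e.\ $\mathbb{E}[Y]=0$), where the cancellation above is not literally valid. In that case, however, $Y \ge 0$ with $\mathbb{E}[Y] = 0$ forces $Y = 0$ almost surely, so the probability on the left-hand side of the target inequality is zero and the bound $\tfrac{1}{1+\epsilon}$ holds trivially. With this caveat noted, the proof is essentially a one-line Markov bound once the correct shift is identified.
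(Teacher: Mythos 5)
Your proof is correct and follows essentially the same route as the paper: define the nonnegative random variable $Y = f(X) - f_{\min,\mathbf{K}}$, compute $\mathbb{E}[Y] = \underline{f}^{(r)}_{\mathbf{K}} - f_{\min,\mathbf{K}}$ via (\ref{expectation}), and apply Markov's inequality. Your explicit treatment of the degenerate case $\mathbb{E}[Y]=0$ is a small but welcome addition that the paper omits.
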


\begin{proof}

Let  $X\sim\mathcal{T}_{\mathbf{K}}$ so that $\mathbb{E}\left[f(X)\right]=\underline{f}^{(r)}_{\mathbf{K}}$.
Define the nonnegative random variable
\begin{equation*}\label{eqyx}
Y:=f(X)-f_{\min,\mathbf{K}}.
\end{equation*}
Then, one has $\mathbb{E}\left[Y\right]=\underline{f}^{(r)}_{\mathbf{K}}-f_{\min,\mathbf{K}}$.
Given  $\epsilon>0$,
 the Markov Inequality (see e.g., \cite[Theorem 3.2]{MR95}) implies
\begin{equation*}
\mathbf{Pr}\left[ Y \ge (1+\epsilon)\mathbb{E}\left[Y\right]\right]\le {1\over 1+\epsilon}.
\end{equation*}
This completes the proof.
\qed
\end{proof}
For given $\epsilon >0$, if one samples $N$ times independently from  $\mathcal{T}_{\mathbf{K}}$, one therefore obtains
an $x \in \bK$ such that
\[
f(x) <  \underline{f}^{(r)}_{\mathbf{K}}+\epsilon\left(\underline{f}_{\mathbf{K}}^{(r)}-f_{\min,\mathbf{K}}\right)
\]
with probability at least $1 - \left(\frac{1}{1+\epsilon}\right)^N$. For example, if $N \ge 1 + \frac{1}{\epsilon}$ then this probability
is at least $1-1/e$.

\section{Numerical examples}
\label{sec:numerial examples}
In this section, we consider several well-known polynomial test functions from global optimization that are listed in Table \ref{table:example}.

\begin{table}[h!]
\caption{Test functions}
\label{table:example}
  \begin{tabular}{| m{3cm} | m{4cm} |m{4cm} | m{2cm} |}
    \hline
    Name & Formula & Minimum ($f_{\min,\mathbf{K}}$) & Search domain ($\mathbf{K}$) \\ \hline
    Booth Function & $f=(x_1+2x_2-7)^2+(2x_1+x_2-5)^2$ & $f(1,3)=0$ & $[-10,10]^2$ \\ \hline
    Matyas Function & $f=0.26(x_1^2+x_2^2)-0.48x_1x_2$ & $f(0,0)=0$ & $[-10,10]^2$ \\ \hline
    Three--Hump Camel Function & $f=2x_1^2-1.05x_1^4+{1\over6}x_1^6+x_1x_2+x_2^2$ & $f(0,0)=0$ & $[-5,5]^2$ \\ \hline
    Motzkin Polynomial & $f=x_1^4x_2^2+x_1^2x_2^4-3x_1^2x_2^2+1$ & $f(\pm1,\pm1)=0$ & $[-2,2]^2$ \\ \hline
    Styblinski--Tang Function ($n$-variate) & $f=\sum_{i=1}^n{1\over 2}x_i^4-8x_i^2+{5\over 2}x_i  $ & $f(-2.093534, \dots ,-2.093534)=-39.16599n$ & $[-5,5]^n$ \\ \hline
    Rosenbrock Function ($n$-variate) & $f=\sum_{i=1}^{n-1} 100(x_{i+1}-x_i^2)^2+(x_i-1)^2 $ & $f(1,\dots,1)=0$ & $[-2.048,2.048]^n$ \\
    \hline
    Matyas Function (Modified-S)& $f=0.26[(20x_1-10)^2+(20x_2-10)^2]-0.48(20x_1-10)(20x_2-10)$ &$f(0.5,0.5)=0$ & $\Delta_2$ \\ \hline

   Three-Hump Camel Function (Modified-S)& $f=2(10x_1-5)^2-1.05(10x_1-5)^4+{1\over 6}(10x_1-5)^6+(10x_1-5)(10x_2-5)+(10x_2-5)^2$ &$f(0.5,0.5)=0$ & $\Delta_2$\\
    \hline
   Matyas Function (Modified-B) & $f=0.26[(20x_1^2-10)^2+(20x_2^2-10)^2]-0.48(20x_1^2-10)(20x_2^2-10)$ &$f(\pm{\sqrt{2}\over 2},\pm{\sqrt{2}\over 2})=0$ & $B_1(0)$ \\ \hline

   Three-Hump Camel Function (Modified-B) & $f=2(10x_1^2-5)^2-1.05(10x_1^2-5)^4+{1\over 6}(10x_1^2-5)^6+(10x_1^2-5)(10x_2^2-5)+(10x_2^2-5)^2$ &$f(\pm{\sqrt{2}\over 2},\pm{\sqrt{2}\over 2})=0$ & $B_1(0)$\\ \hline

    \hline
  \end{tabular}
  \end{table}

\noindent For these functions, we calculate the parameter $\underline{f}_{\mathbf{K}}^{(r)}$ by solving the SDP  (\ref{fundr2}) for increasing values of the order $r$. As already mentioned by Lasserre \cite[Section 4]{Las11}, this computation may be done as a
generalised eigenvalue problem --- one does not actually have to use an SDP solver. This follows from the fact that the SDP  (\ref{fundr2})
only has one constraint.
In particular,
  $\underline{f}_{\mathbf{K}}^{(r)}$ {is equal to the largest scalar $\lambda$ for which $A-\lambda B\succeq 0$, i.e.,  the smallest  generalized eigenvalue} of the system:
\[
Ax = \lambda Bx \quad \quad\quad (x \neq 0),
\]
where the symmetric matrices $A$ and $B$ are of order ${n + r \choose r}$ with rows and columns  indexed by $N(n,r)$,
and
\begin{equation}
\label{matrices A and B}
A_{\alpha, \beta} = \sum_{\delta \in N(n,d)} f_\delta \int_{\mathbf{K}} x^{\alpha + \beta + \delta} dx, \quad B_{\alpha, \beta} = \int_{\mathbf{K}} x^{\alpha + \beta} dx \quad \alpha, \beta \in {N}(n,r).
\end{equation}
We performed the computation on a PC with Intel(R) Core(TM) i7-4600U CPU (2.10 GHz) and with 8 GB RAM. The generalized eigenvalue computation was done in Matlab using the {\tt eig} function.

%
%

\smallskip\noindent
We record the values $\underline{f}_{\mathbf{K}}^{(r)}$ as well as the CPU times in
Tables \ref{table:result1}, \ref{table:n10}, \ref{table:n15}, \ref{table:n20},  and \ref{tab:overview} for minimization over the hypercube, the simplex and the ball. Note that we only list the time for solving the generalised eigenvalue problem, and not for constructing the matrices $A$ and $B$ in  (\ref{matrices A and B}). In other words, we  assume the necessary moments are computed beforehand, and that the time needed to construct the {matrices $A$ and $B$} in  (\ref{matrices A and B}) is negligible if the relevant moments are known.

For instance, in Table \ref{table:result1}, we have $n=2$ and we can compute the parameter $\underline{f}_{\mathbf{K}}^{(r)}$  up to order $r=20$ for  four test functions. Moreover, in Tables \ref{table:n10}, \ref{table:n15} and \ref{table:n20}, we have $n=10,15,20$, respectively, and the parameter $\underline{f}_{\mathbf{K}}^{(r)}$ can be computed up to order $r=5$, $r=4$ and $r=3$, respectively. Note that in all cases the computation  is very fast (at most a few seconds). However, for larger values of $n$ or $r$ we sometimes encountered numerical instability. This may be due to inaccurate calculation of the moments, or to  inherent ill-conditioning of the
matrices $A$ and $B$ in  (\ref{matrices A and B}). These issues are of practical importance, but beyond the scope of the present study. Also,
one must bear in mind that the order of the matrices $A$ and $B$ grows as ${n+r \choose r}$, and this imposes  a practical limit on how large the values of $n$ and $r$ may be when computing $\underline{f}_{\mathbf{K}}^{(r)}$.

\begin{table}[h!]
\caption{$\underline{f}_{\mathbf{K}}^{(r)}$ for Booth, Matyas, Three--Hump Camel and Motzkin Functions over the hypercube}
\label{table:result1}
  \begin{tabular}{| c | m{1.3cm} | m{1cm} | m{1.3cm} | m{1cm} | m{1.3cm} | m{1cm} | m{1.4cm} | m{1cm} |}
    \hline
\multirow{2}{*}{r} & \multicolumn{2}{c|}{Booth Function} & \multicolumn{2}{c|}{Matyas Function} & \multicolumn{2}{m{3cm}|}{Three--Hump Camel Function}& \multicolumn{2}{c|}{Motzkin Polynomial} \\ \cline{2-9}
                   & Value  & Time (sec.)& Value  & Time (sec.)  & Value  & Time (sec.)  & Value  & Time (sec.)\\ \hline
$1$  & $244.680$ & $0.000666$ & $8.26667$ & $0.000739$ & $265.774$ & $0.000742$     & $4.2$      & $0.000719$\\ \hline
$2$  & $162.486$ & $0.000061$ & $5.32223$ & $0.000072$ & $29.0005$ & $0.000062$     & $1.06147$  & $0.000088$\\ \hline
$3$  & $118.383$ & $0.000083$ & $4.28172$ & $0.000072$ & $29.0005$ & $0.000066$     & $1.06147$  & $0.000080$\\ \hline
$4$  & $97.6473$ & $0.000079$ & $3.89427$ & $0.000119$ & $9.58064$ & $0.000117$     & $0.829415$ & $0.000118$\\ \hline
$5$  & $69.8174$ & $0.000171$ & $3.68942$ & $0.000208$ & $9.58064$ & $0.000177$     & $0.801069$ & $0.000189$\\ \hline
$6$  & $63.5454$ & $0.000277$ & $2.99563$ & $0.000263$ & $4.43983$ & $0.000263$     & $0.801069$ & $0.000208$\\ \hline
$7$  & $47.0467$ & $0.000423$ & $2.54698$ & $0.000343$ & $4.43983$ & $0.001146$     & $0.708889$ & $0.000395$\\ \hline
$8$  & $41.6727$ & $0.000587$ & $2.04307$ & $0.000417$ & $2.55032$ & $0.000647$     & $0.565553$ & $0.000584$\\ \hline
$9$  & $34.2140$ & $0.000657$ & $1.83356$ & $0.000655$ & $2.55032$ & $0.000586$     & $0.565553$ & $0.000766$\\ \hline
$10$ & $28.7248$ & $0.000997$ & $1.47840$ & $0.000780$ & $1.71275$ & $0.000782$     & $0.507829$ & $0.001210$\\ \hline
$11$ & $25.6050$ & $0.001181$ & $1.37644$ & $0.009241$ & $1.71275$ & $0.001026$     & $0.406076$ & $0.001261$\\ \hline
$12$ & $21.1869$ & $0.001942$ & $1.11785$ & $0.001753$ & $1.2775$  & $0.001693$     & $0.406076$ & $0.001712$\\ \hline
$13$ & $19.5588$ & $0.002352$ & $1.0686$  & $0.001857$ & $1.2775$  & $0.002031$     & $0.3759$   & $0.003427$\\ \hline
$14$ & $16.5854$ & $0.002829$ & $0.8742$  & $0.002253$ & $1.0185$  & $0.002629$     & $0.3004$   & $0.003711$\\ \hline
$15$ & $15.2815$ & $0.003618$ & $0.8524$  & $0.002270$ & $1.0185$  & $0.002936$     & $0.3004$   & $0.002351$\\ \hline
$16$ & $13.4626$ & $0.003452$ & $0.7020$  & $0.003580$ & $0.8434$  & $0.003452$     & $0.2819$   & $0.003672$\\ \hline
$17$ & $12.2075$ & $0.004248$ & $0.6952$  & $0.004662$ & $0.8434$  & $0.004652$     & $0.2300$   & $0.004349$\\ \hline
$18$ & $11.0959$ & $0.005217$ & $0.5760$  & $0.005510$ & $0.7113$  & $0.004882$     & $0.2300$   & $0.006060$\\ \hline
$19$ & $9.9938$  & $0.007200$ & $0.5760$  & $0.005610$ & $0.7113$  & $0.006752$     & $0.2185$   & $0.007641$\\ \hline
$20$ & $9.2373$  & $0.009707$ & $0.4815$  & $0.006975$ & $0.6064$  & $0.007031$     & $0.1817$   & $0.007686$\\ \hline
  \end{tabular}
  \end{table}

\begin{table}[h!]
\caption{$\underline{f}_{\mathbf{K}}^{(r)}$ for Styblinski--Tang and Rosenbrock Functions (with $n=10$) over the hypercube}\label{table:n10}
\begin{tabular}{| c | c | m{1cm} | c | m{1cm} |}
\hline
\multirow{2}{*}{r}&\multicolumn{2}{c|}{Sty.--Tang ($n=10$)}&\multicolumn{2}{c|}{Rosenb. ($n=10$)}\\ \cline{2-5}
           & Value  & Time (sec.)    & Value  & Time (sec.)   \\ \hline
    $1$   & $-57.1688$  & $0.098$      & $3649.85$  & $0.0005$  \\  \hline
    $2$   & $-94.5572$  & $0.001$      & $2813.66$  & $0.0009$  \\  \hline
    $3$   & $-108.873$  & $0.011$      & $2393.63$  & $0.0156$  \\  \hline
    $4$   & $-132.8810$ & $0.349$     & $1956.81$  & $0.4004$  \\ \hline
    $5$   & $-146.7906$ & $9.245$     & $1701.85$  & $12.997$     \\ \hline
  \end{tabular}
  \end{table}

 \begin{table}[h!]
\caption{$\underline{f}_{\mathbf{K}}^{(r)}$ for Styblinski--Tang and Rosenbrock Functions (with $n=15$) over the hypercube}\label{table:n15}
\begin{tabular}{| c | c | c |c|c|}
\hline
\multirow{2}{*}{r} & \multicolumn{2}{c|}{Sty.--Tang ($n=15$)}&\multicolumn{2}{c|}{Rosenb. ($n=15$)}\\ \cline{2-5}
           & Value  & Time (sec.)    & Value  & Time (sec.) \\ \hline
    $1$   & $-82.8311$& $0.001071$  & $5887.5$ & $0.094693$     \\ \hline
    $2$   & $-130.464$& $0.001707$  & $4770.71$ & $0.002282$   \\ \hline
    $3$   & $-148.5594$ & $0.170907$  & $4160.78$ & $0.157897$  \\ \hline
    $4$   & $-180.9728$ & $16.796383$  & $3552.04$ & $24.696591$  \\ \hline
  \end{tabular}
  \end{table}

  \begin{table}[h!]
\caption{$\underline{f}_{\mathbf{K}}^{(r)}$ for Styblinski--Tang and Rosenbrock Functions (with $n=20$) over the hypercube}\label{table:n20}
\begin{tabular}{| c | c | c |c|c|}
\hline
\multirow{2}{*}{r} & \multicolumn{2}{c|}{Sty.--Tang ($n=20$)}&\multicolumn{2}{c|}{Rosenb. ($n=20$)}\\ \cline{2-5}
           & Value  & Time (sec.)    & Value  & Time (sec.) \\ \hline
    $1$   & $-107.875$ & $0.972741$      & $8158.36$  & $0.000949$  \\ \hline
    $2$   & $-164.11$ & $0.344403$      & $6806.74$  & $0.011370$   \\ \hline
    $3$   & $-185.6488$ & $2.655447$      & $6029.02$  & $2.955319$   \\ \hline
  \end{tabular}
  \end{table}


\begin{table}[h!]
\caption{$\underline{f}_{\mathbf{K}}^{(r)}$ for Matyas and Three-Hump Camel Functions (Modified) over the Simplex and the Euclidean ball. \label{tab:overview}}
  \begin{tabular}{|c| c | c | c | c|c|c|c|c|}\hline
  \multirow{2}{*}{r} & \multicolumn{2}{c|}{Matyas (Modified-S)}&\multicolumn{2}{c|}{Th.-H. C. (Modified-S)}& \multicolumn{2}{c|}{Matyas (Modified-B)}&\multicolumn{2}{c|}{Th.-H. C. (Modified-B)}\\ \cline{2-9}
           & Value  & Time (sec.)    & Value  & Time (sec.) & Value  & Time (sec.)    & Value  & Time (sec.) \\ \hline
    $1$   & $7.2243$ & $0.222604$ & $84.354$ & $0.000457$ & $18.000$ & $0.000379$      & $146.41$  & $0.000454$  \\ \hline
    $2$   & $4.6536$ & $0.000085$ & $22.398$ & $0.000081$ & $6.3995$ & $0.000049$      & $138.91$  & $0.000052$   \\ \hline
    $3$   & $3.9404$ & $0.000124$ & $12.353$ & $0.000115$ & $6.3995$ & $0.000054$      & $48.508$  & $0.000069$  \\ \hline
    $4$   & $3.7067$ & $0.000176$ & $3.9153$ & $0.000112$ & $4.4091$ & $0.000133$      & $39.673$  & $0.000111$   \\ \hline
    $5$   & $3.2317$ & $0.000696$ & $2.9782$ & $0.000489$ & $4.4091$ & $0.000187$      & $18.045$  & $0.000264$  \\ \hline
    $6$   & $2.7328$ & $0.000275$ & $1.3303$ & $0.000255$ & $3.9652$ & $0.000292$      & $13.881$  & $0.000309$   \\ \hline
    $7$   & $2.2985$ & $0.000511$ & $1.1773$ & $0.000334$ & $3.9652$ & $0.000323$      & $7.7876$  & $0.000300$  \\ \hline
    $8$   & $1.9536$ & $0.001432$ & $0.77992$& $0.000560$ & $3.8536$ & $0.000395$      & $5.7685$  & $0.000608$   \\ \hline
    $9$   & $1.6639$ & $0.000709$ & $0.73202$& $0.000666$ & $3.8536$ & $0.000517$      & $3.8699$  & $0.000636$  \\ \hline
   $10$   & $1.4293$ & $0.003370$ & $0.60846$& $0.001034$ & $3.4943$ & $0.000687$      & $2.8359$  & $0.000704$   \\ \hline
  \end{tabular}
  \end{table}


\smallskip\noindent
{Furthermore, 
we use the method described in Section \ref{secgenerate} to generate samples that are feasible solutions of (\ref{fundr}).
We report results  for the bivariate Rosenbrock and the Three--Hump Camel functions over the hypercube, and for the Matyas and Three-Hump Camel functions (Modified-S) over the simplex. 
 For each order $r\ge 1$,  the sample sizes $20$ and  $1000$ are used. We also generate samples uniformly from the feasible set, for comparison.
We give the results in Tables \ref{table:samplera}, \ref{table:samplerb}, \ref{table:samplerc} and \ref{table:samplerd},
where we record the mean, variance and the minimum value of these samples together with $\underline{f}_{\mathbf{K}}^{(r)}$ (which equals the sample mean by (\ref{expectation})).}

\begin{center}
  \begin{table}
\caption{Sampling results for the Rosenbrock Function ($n=2$) over the hypercube}
\label{table:samplera}
  \begin{tabular}{| c | c | c | c | c | c |}
\hline
     r                     & $\underline{f}_{\mathbf{K}}^{(r)}$ & Mean          & Variance     & Minimum    & Sample Size\\ \hline
    \multirow{2}{*}{$1$}   & \multirow{2}{*}{$214.648$}         & $121.125$     & $14005.5$  & $0.00451826$ & $20$ \\ \cline{3-6}
                           &                                    & $209.9$       & $80699.0$  & $0.0008754$  & $1000$\\ \hline
    \multirow{2}{*}{$2$}   & \multirow{2}{*}{$152.310$}         & $184.496$     & $58423.9$  & $4.94265$    & $20$\\ \cline{3-6}
                           &                                    & $149.6$       & $54455.0$  & $0.02805$    & $1000$\\ \hline
    \multirow{2}{*}{$3$}   & \multirow{2}{*}{$104.889$}         & $146.618$     & $64611.2$  & $0.0113339$  & $20$ \\ \cline{3-6}
                           &                                    & $110.1$       & $26022.0$  & $0.0665$     & $1000$\\ \hline
    \multirow{2}{*}{$4$}   & \multirow{2}{*}{$75.6010$}         & $62.4961$     & $5803.21$  & $0.0542813$  & $20$  \\ \cline{3-6}
                           &                                    & $75.65$       & $45777.0$  & $0.007285$   & $1000$\\ \hline
    \multirow{2}{*}{$5$}   & \multirow{2}{*}{$51.5037$}         & $58.4032$     & $4397.0$   & $0.668679$   & $20$ \\ \cline{3-6}
                           &                                    & $50.64$       & $6285.0$   & $0.01382$    & $1000$\\ \hline
    \multirow{2}{*}{$6$}   & \multirow{2}{*}{$41.7878$}         & $35.4183$     & $2936.24$  & $1.16154$    & $20$\\ \cline{3-6}
                           &                                    & $37.64$       & $3097.0$   & $0.06188$    & $1000$\\ \hline
    \multirow{2}{*}{$7$}   & \multirow{2}{*}{$30.1392$}         & $29.6545$     & $1022.2$   & $1.05813$    & $20$ \\ \cline{3-6}
                           &                                    & $27.11$       & $1332.0$   & $0.02044$    & $1000$\\ \hline
    \multirow{2}{*}{$8$}   & \multirow{2}{*}{$25.8329$}         & $19.5392$     & $301.334$  & $0.505628$   & $20$\\ \cline{3-6}
                           &                                    & $34.32$       & $4106.0$   & $0.074$      & $1000$\\ \hline
    \multirow{2}{*}{$9$}   & \multirow{2}{*}{$19.4972$}         & $20.8982$     & $328.475$  & $0.564992$   & $20$ \\ \cline{3-6}
                           &                                    & $18.65$       & $593.6$    & $0.07951$    & $1000$\\ \hline
    \multirow{2}{*}{$10$}  & \multirow{2}{*}{$17.3999$}         & $9.37959$     & $146.496$  & $0.562473$   & $20$   \\ \cline{3-6}
                           &                                    & $15.33$       & $685.7$    & $0.1448$     & $1000$\\ \hline
    \multirow{2}{*}{$11$}  & \multirow{2}{*}{$13.6289$}         & $8.74923$     & $52.1436$  & $0.75774$    & $20$ \\ \cline{3-6}
                           &                                    & $15.7$        & $7498.0$   & $0.1719$     & $1000$\\ \hline
    \multirow{2}{*}{$12$}  & \multirow{2}{*}{$12.5024$}         & $5.43151$     & $66.561$   & $0.438172$   & $20$  \\ \cline{3-6}
                           &                                    & $12.7$        & $764.7$    & $0.0945$     & $1000$\\ \hline
 \multicolumn{2}{|c|}{\multirow{2}{*}{Uniform Sample} }         & $489.722$     & $433549.0$ & $9.0754$     & $20$\\ \cline{3-6}
 \multicolumn{1}{|c}{}     &                                    & $465.729$     & $361150.0$ & $0.0771463$  & $1000$\\ \hline
  \end{tabular}
  \end{table}
\end{center}


\begin{center}
  \begin{table}
\caption{Sampling results for the Three--Hump Camel Function over the hypercube}
\label{table:samplerb}
  \begin{tabular}{| c | c | c | c | c | c |}
\hline
     r                     & $\underline{f}_{\mathbf{K}}^{(r)}$ & Mean          & Variance     & Minimum    & Sample Size\\ \hline
    \multirow{2}{*}{$1$}   & \multirow{2}{*}{$265.774$}         & $216.773$     & $177142.0$  & $0.106854$ & $20$ \\ \cline{3-6}
                           &                                    & $261.23$       & $193466.0$  & $0.11705$  & $1000$\\ \hline
    \multirow{2}{*}{$2$}   & \multirow{2}{*}{$29.0005$}         & $28.0344$     & $2964.85$  & $1.1718$    & $20$\\ \cline{3-6}
                           &                                    & $27.712$       & $6712.8$  & $0.014255$    & $1000$\\ \hline
    \multirow{2}{*}{$3$}   & \multirow{2}{*}{$29.0005$}         & $14.9951$     & $523.904$  & $0.452655$  & $20$ \\ \cline{3-6}
                           &                                    & $32.363$       & $16681.0$  & $0.0088426$     & $1000$\\ \hline
    \multirow{2}{*}{$4$}   & \multirow{2}{*}{$9.58064$}         & $2.99756$     & $14.1201$  & $0.175016$  & $20$  \\ \cline{3-6}
                           &                                    & $10.364$       & $1944.0$  & $0.010013$   & $1000$\\ \hline
    \multirow{2}{*}{$5$}   & \multirow{2}{*}{$9.58064$}         & $4.41907$     & $14.1358$   & $0.419394$   & $20$ \\ \cline{3-6}
                           &                                    & $9.1658$       & $643.88$   & $0.0015924$    & $1000$\\ \hline
    \multirow{2}{*}{$6$}   & \multirow{2}{*}{$4.43983$}         & $7.98481$     & $245.089$  & $0.126147$    & $20$\\ \cline{3-6}
                           &                                    & $4.5791$       & $493.12$   & $0.0035581$    & $1000$\\ \hline
    \multirow{2}{*}{$7$}   & \multirow{2}{*}{$4.43983$}         & $3.96711$     & $20.3193$   & $0.260331$    & $20$ \\ \cline{3-6}
                           &                                    & $3.7911$       & $57.847$   & $0.0076111$    & $1000$\\ \hline
    \multirow{2}{*}{$8$}   & \multirow{2}{*}{$2.55032$}         & $2.18925$     & $3.87943$  & $0.0310113$   & $20$\\ \cline{3-6}
                           &                                    & $2.2302$       & $8.3767$   & $0.0028817$      & $1000$\\ \hline
    \multirow{2}{*}{$9$}   & \multirow{2}{*}{$2.55032$}         & $1.38102$     & $2.27433$  & $0.138641$   & $20$ \\ \cline{3-6}
                           &                                    & $3.2217$       & $812.18$    & $0.00014805$    & $1000$\\ \hline
    \multirow{2}{*}{$10$}  & \multirow{2}{*}{$1.71275$}         & $1.03179$     & $0.992636$  & $0.0645815$   & $20$   \\ \cline{3-6}
                           &                                    & $1.5069$       & $3.9581$    & $0.0014225$     & $1000$\\ \hline
    \multirow{2}{*}{$11$}  & \multirow{2}{*}{$1.71275$}         & $1.30757$     & $1.90985$  & $0.0320489$    & $20$ \\ \cline{3-6}
                           &                                    & $1.6379$        & $7.2518$   & $0.0021144$     & $1000$\\ \hline
    \multirow{2}{*}{$12$}  & \multirow{2}{*}{$1.27749$}         & $0.841194$    & $0.914514$ & $0.0369565$   & $20$  \\ \cline{3-6}
                           &                                    & $1.2105$        & $2.3$    & $0.0005154$     & $1000$\\ \hline
 \multicolumn{2}{|c|}{\multirow{2}{*}{Uniform Sample} }         & $304.032$     & $163021.0$ & $1.65885$     & $20$\\ \cline{3-6}
 \multicolumn{1}{|c}{}     &                                    & $243.216$     & $183724.0$ & $0.00975034$  & $1000$\\ \hline
  \end{tabular}
  \end{table}
\end{center}


  \begin{table}
\caption{Sampling results for the Matyas Function (Modified-S) over the simplex}
\label{table:samplerc}
  \begin{tabular}{| c  |  c | c | c |c|c|}\hline
     r                     & $\underline{f}_{\mathbf{K}}^{(r)}$ & Mean          & Variance     & Minimum    & Sample Size\\ \hline
    \multirow{2}{*}{$1$}   & \multirow{2}{*}{$7.2243$}         & $6.3018$     & $37.373$  & $1.2448$ & $20$ \\ \cline{3-6}
                           &                                   & $7.0542$       & $64.863$  & $0.31812$  & $1000$\\ \hline
    \multirow{2}{*}{$2$}   & \multirow{2}{*}{$4.6536$}          & $5.7252$     & $34.964$  & $1.8924$    & $20$\\ \cline{3-6}
                           &                                   & $4.5932$       & $8.293$  & $0.91671$    & $1000$\\ \hline
    \multirow{2}{*}{$3$}   & \multirow{2}{*}{$3.9404$}          & $3.5187$     & $0.31411$  & $2.4465$  & $20$ \\ \cline{3-6}
                           &                                   & $3.7544$       & $1.3576$  & $0.071075$     & $1000$\\ \hline
    \multirow{2}{*}{$4$}   & \multirow{2}{*}{$3.7067$}          & $3.4279$     & $1.7187$  & $0.92913$  & $20$  \\ \cline{3-6}
                           &                                   & $3.8679$       & $6.5113$  & $0.027508$   & $1000$\\ \hline
    \multirow{2}{*}{$5$}   & \multirow{2}{*}{$3.2317$}          & $3.8273$     & $10.173$   & $0.40131$   & $20$ \\ \cline{3-6}
                           &                                    & $3.1485$       & $6.1263$   & $0.035796$    & $1000$\\ \hline
    \multirow{2}{*}{$6$}   & \multirow{2}{*}{$2.7328$}          & $2.2606$     & $3.3343$  & $0.2595$    & $20$\\ \cline{3-6}
                           &                                  & $2.5997$       & $10.8$   & $0.0016761$    & $1000$\\ \hline
    \multirow{2}{*}{$7$}   & \multirow{2}{*}{$2.2985$}         & $2.4568$     & $4.1652$   & $0.18947$    & $20$ \\ \cline{3-6}
                           &                                   & $2.1541$       & $12.868$   & $0.002669$    & $1000$\\ \hline
    \multirow{2}{*}{$8$}   & \multirow{2}{*}{$1.9536$}          & $0.9223$     & $0.94139$  & $0.064404$   & $20$\\ \cline{3-6}
                           &                                   & $1.9418$       & $9.5627$   & $0.0000037429$      & $1000$\\ \hline
    \multirow{2}{*}{$9$}   & \multirow{2}{*}{$1.6639$}         & $1.4446$     & $1.9372$  & $0.048915$   & $20$ \\ \cline{3-6}
                           &                                    & $1.7266$       & $16.738$    & $0.0019792$    & $1000$\\ \hline
    \multirow{2}{*}{$10$}  & \multirow{2}{*}{$1.4293$}          & $2.0005$     & $2.0226$  & $0.016453$   & $20$   \\ \cline{3-6}
                           &                                   & $1.4917$       & $16.035$    & $0.00015252$     & $1000$\\ \hline
 \multicolumn{2}{|c|}{\multirow{2}{*}{Uniform Sample} }         & $26.428$     & $641.59$ & $0.085716$     & $20$\\ \cline{3-6}
 \multicolumn{1}{|c}{}     &                                    & $11.905$     & $256.0$ & $0.010946$  & $1000$\\ \hline
  \end{tabular}
  \end{table}

\begin{table}
\caption{Sampling results for the Three-Hump Camel Function (Modified-S) over the simplex}
\label{table:samplerd}
  \begin{tabular}{| c | c | c | c | c |c|}
\hline
     r                     & $\underline{f}_{\mathbf{K}}^{(r)}$ & Mean          & Variance     & Minimum    & Sample Size\\ \hline
    \multirow{2}{*}{$1$}   & \multirow{2}{*}{$84.354$}        & $104.93$     & $122488.0$  & $0.33441$ & $20$ \\ \cline{3-6}
                           &                                  & $89.732$       & $48238.0$  & $0.0011036$  & $1000$\\ \hline
    \multirow{2}{*}{$2$}   & \multirow{2}{*}{$22.398$}         & $37.036$     & $9864.0$  & $0.57012$    & $20$\\ \cline{3-6}
                           &                                    & $22.292$       & $10102.0$  & $0.0022204$    & $1000$\\ \hline
    \multirow{2}{*}{$3$}   & \multirow{2}{*}{$12.353$}       & $3.4161$     & $49.898$  & $0.28108$  & $20$ \\ \cline{3-6}
                           &                                    & $11.707$       & $1515.9$  & $0.00065454$     & $1000$\\ \hline
    \multirow{2}{*}{$4$}   & \multirow{2}{*}{$3.9153$}        & $2.4193$     & $9.0182$  & $0.16865$  & $20$  \\ \cline{3-6}
                           &                                    & $3.6768$       & $592.96$  & $0.0016775$   & $1000$\\ \hline
    \multirow{2}{*}{$5$}   & \multirow{2}{*}{$2.9782$}        & $1.8336$     & $6.3414$   & $0.11311$   & $20$ \\ \cline{3-6}
                           &                                    & $2.5237$       & $47.619$   & $0.00097905$    & $1000$\\ \hline
    \multirow{2}{*}{$6$}   & \multirow{2}{*}{$1.3303$}       & $2.355$     & $26.176$  & $0.0092016$    & $20$\\ \cline{3-6}
                           &                                    & $1.2134$       & $8.7253$   & $0.00040725$    & $1000$\\ \hline
    \multirow{2}{*}{$7$}   & \multirow{2}{*}{$1.1773$}        & $1.0385$     & $1.0569$   & $0.053695$    & $20$ \\ \cline{3-6}
                           &                                    & $1.092$       & $6.718$   & $0.00050329$    & $1000$\\ \hline
    \multirow{2}{*}{$8$}   & \multirow{2}{*}{$0.77992$}        & $0.9737$     & $0.73522$  & $0.10604$   & $20$\\ \cline{3-6}
                           &                                    & $0.72927$       & $0.73641$   & $0.00048517$      & $1000$\\ \hline
    \multirow{2}{*}{$9$}   & \multirow{2}{*}{$0.73202$}      & $0.69755$     & $0.19107$  & $0.051634$   & $20$ \\ \cline{3-6}
                           &                                    & $0.65302$       & $0.28537$    & $0.00024601$    & $1000$\\ \hline
    \multirow{2}{*}{$10$}  & \multirow{2}{*}{$0.60846$}        & $0.67575$     & $0.17453$  & $0.010351$   & $20$   \\ \cline{3-6}
                           &                                    & $0.5616$       & $0.17821$    & $0.00044175$     & $1000$\\ \hline

 \multicolumn{2}{|c|}{\multirow{2}{*}{Uniform Sample} }         & $518.48$     & $354855.0$ & $0.9165$     & $20$\\ \cline{3-6}
 \multicolumn{1}{|c}{}     &                                    & $485.77$     & $391577.0$ & $0.32713$  & $1000$\\ \hline
  \end{tabular}
  \end{table}

\noindent Note that the average of the sample function values approximate $\underline{f}_{\mathbf{K}}^{(r)}$ reasonably well for sample size $1000$, but poorly for sample
size $20$. Moreover, the average sample function value for uniform sampling from $\bK$ is much higher than $\underline{f}_{\mathbf{K}}^{(r)}$.
Also, the minimum function value for sampling from $\mathcal{T}_{\mathbf{K}}$ is significantly lower than the minimum function value
obtained by uniform sampling for most values of $r$.
In terms of generating ``good" feasible solutions, sampling from  $\mathcal{T}_{\mathbf{K}}$ therefore outperforms uniform sampling from $\bK$ for these examples, as
one would expect.

\section{Concluding remarks}
{We conclude with some additional remarks on Assumption \ref{propeta}, and some
discussion on perspectives for future work.
}

\label{sec:conclusion}
\subsection{Revisiting Assumption \ref{propeta}}\label{secass}

\noindent In this section we consider in more detail Assumption \ref{propeta}, the geometric assumption which we made about the set $\mathbf K$.
 First we recall another condition, known as the {\em interior cone condition},  which is classically used in approximation theory (see, e.g., Wendland \cite{HW01}).

\begin{definition}\label{deficc}
\cite[Definition 3.1]{HW01} A set $\mathbf{K}\subseteq \oR^n$ is said to satisfy an interior cone condition if there exist an angle $\theta\in (0,{\pi/ 2})$ and a radius $\rho>0$ such that, for every $x\in\mathbf{K}$, a unit vector $\xi(x)$ exists such that the set
\begin{equation}\label{eqcxp}
C(x,\xi(x),\theta,\rho):=\{x+\lambda y: y\in\oR^n,\|y\|=1,y^T\xi(x)\ge\cos{\theta},\lambda\in[0,\rho] \}
\end{equation}
is contained in $\mathbf{K}$.
\end{definition}

\smallskip\noindent
For instance, as  we now recall, Euclidean balls and star-shaped sets satisfy the interior cone condition.

\begin{lemma}\cite[Lemma 3.10]{HW01}\label{lemball}
Every Euclidean ball with radius $r>0$ satisfies an interior cone condition with radius $\rho=r$ and angle $\theta=\pi/3$.
\end{lemma}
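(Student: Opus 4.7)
The plan is to pick, at each point $x \in B_r(c)$, the cone direction $\xi(x)$ as the unit vector pointing from $x$ toward the center $c$; that is, $\xi(x) = (c-x)/\|c-x\|$ when $x \neq c$, and any unit vector when $x = c$. Geometrically this is natural: a cone opening toward the center should remain inside the ball, and the half-angle $\pi/3$ is essentially the largest one for which this containment survives for points close to the boundary.

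First I would dispose of the case $x = c$, where any $p = x + \lambda y$ with $\|y\| = 1$ and $\lambda \in [0,r]$ trivially satisfies $\|p - c\| = \lambda \leq r$. For $x \neq c$, setting $d := \|x - c\| \in (0,r]$ and expanding the squared distance to the center gives
\begin{equation*}
\|x + \lambda y - c\|^2 \;=\; d^2 \,-\, 2\lambda d\,(y^T \xi(x)) \,+\, \lambda^2 .
\end{equation*}
Using the cone condition $y^T \xi(x) \geq \cos(\pi/3) = 1/2$ from the definition (\ref{eqcxp}), the right-hand side is bounded above by the convex quadratic $g(\lambda) := \lambda^2 - \lambda d + d^2$.

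The remaining step is to verify $g(\lambda) \leq r^2$ on $[0,r]$. Since $g$ is convex, its maximum on this interval is attained at one of the endpoints, and one has $g(0) = d^2 \leq r^2$ together with $g(r) = r^2 - d(r-d) \leq r^2$, both holding because $0 \leq d \leq r$. This yields $\|x + \lambda y - c\| \leq r$, hence $C(x,\xi(x),\pi/3,r) \subseteq B_r(c)$, as required. There is no serious obstacle here: the only substantive input is the inequality $y^T\xi(x) \geq 1/2$, after which the bound reduces to a one-variable quadratic estimate. It is worth noting that the choice $\theta = \pi/3$ is sharp with this choice of $\xi(x)$: when $d = r$ and $\lambda = r$ with $y^T\xi(x) = 1/2$, one gets equality, confirming that no larger half-angle works uniformly over boundary points.
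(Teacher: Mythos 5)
Your proof is correct and follows essentially the same route as the paper's (which itself reproduces Wendland's argument): point the cone toward the center, expand the squared distance, and use $y^T\xi(x)\ge 1/2$ to reduce to bounding the quadratic $\lambda^2-\lambda d+d^2$ by $r^2$ on $[0,r]$. The only cosmetic difference is that you bound the quadratic by convexity and endpoint evaluation, whereas the paper splits into the cases $\lambda\le d$ and $\lambda\ge d$; both are equally valid.
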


\ignore{
\begin{proof}
For completeness, we review the proof of Lemma \ref{lemball} given in \cite{HW01}.

\smallskip\noindent
We can assume w.l.o.g. that the ball is centered at zero. For every point $x$ in the ball we have to find a cone with prescribed radius and angle. For the center $x=0$ we can choose any direction to see that such a cone is indeed contained in the ball. For $x\ne 0$ we choose the direction $\xi(x)=-x/\|x\|$, see Figure \ref{balltwo}. A typical point on the cone is given by $x+\lambda y$ with $\|y\|=1$, $y^T\xi(x)\ge \cos(\pi/3)=1/2$ and $0\le \lambda \le r$. For this point we find
$$\|x+\lambda y\|^2=\|x\|^2+\lambda^2-2\lambda\|x\|\xi(x)^Ty\le \|x\|^2+\lambda^2-\lambda\|x\|.$$
The last expression equals $\|x\|(\|x\|-\lambda)+\lambda^2$, which can be bounded by $\lambda^2\le r^2$ in the case $\|x\|\le\lambda$.
If $\|x\|\ge \lambda$ then we can transform the last expression to $\lambda(\lambda-\|x\|)+\|x\|^2$, which can be bounded by $\|x\|^2\le r^2$. Thus $x+\lambda y$ is contained in the ball.
\qed
\end{proof}
}

\ignore{
\begin{figure}
\centering
\begin{tikzpicture}[scale=1.5]
\centering
\draw (0,0) circle (2cm);
\draw (0.8,0) arc (0:60:2cm);
\draw (0.8,0) arc (0:-60:2cm);
\draw (-1,0) arc (0:60:2mm);
\draw (-1,0) arc (0:-60:2mm);
\filldraw [black] (0,0) circle (1pt);
\draw (0,-0.2) node {$0$};
\draw (-1.4,0) node {$x$};
\filldraw [black] (-1.2,0) circle (1pt);
\draw (-0.9,0.15) node {$\theta$};
\draw (-0.9,-0.15) node {$\theta$};
\draw (-0.8,1) node {$r$};
\draw (-0.8,-1) node {$r$};
\draw (-0.3,0.25) node {$\xi(x)$};
\filldraw [black] (-0.2,1) circle (1pt);
\draw (0,1.1) node {$x+\lambda y$};
\draw[dashed]  (-1.2,0)--(-0.2,1);
\draw  (-1.2,0)--(-0.2,1.732);
\draw  (-1.2,0)--(-0.2,-1.732);
\draw [->] (-1.2,0)--(-0.3,0);
\draw (-0.3,0)--(0.8,0);
\end{tikzpicture}
\caption[Two-dimensional Euclidean ball\index{Euclidean ball}]{Euclidean Balls satisfy an interior cone condition}
\label{balltwo}
\end{figure}
}


\begin{definition}\label{defstarshaped}
\cite[Definition 11.25]{HW01}
A set $\bK$ is said to be {\em star-shaped} with respect to a ball $B_r(x_c)$ if, for every $x\in\bK$, the closed convex hull of $\{x\}\cup B_r(x_c)$ is contained in $\bK$.
\end{definition}

\begin{proposition}\cite[Proposition 11.26]{HW01}\label{propstar}
If $\bK$ is bounded, star-shaped with respect to a ball $B_r(x_c)$, then $\bK$ satisfies an interior cone condition with radius $\rho=r$ and angle $\theta=2\arcsin\left[{r\over 2\sqrt{D(\bK)}}\right]$.
\end{proposition}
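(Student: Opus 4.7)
Plan for the proof.

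The plan is to choose the cone axis $\xi(x) := (x_c-x)/\|x_c-x\|$ when $x \neq x_c$ (and any unit vector otherwise), and to argue that $C(x,\xi(x),\theta,r) \subseteq \mathrm{conv}(\{x\} \cup B_r(x_c))$, which lies in $\bK$ by the star-shaped hypothesis. Fix a typical point $p = x + \lambda z$ of the cone, set $d := \|x-x_c\|$, $D := \sqrt{D(\bK)}$, and let $\alpha \in [0,\theta]$ denote the angle between $z$ and $\xi(x)$. To certify $p \in \mathrm{conv}(\{x\} \cup B_r(x_c))$, I would produce some $\mu \ge \lambda$ with $x + \mu z \in B_r(x_c)$, so that $p$ lies on the segment from $x$ to $x + \mu z$.

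Writing out $\|x + \mu z - x_c\|^2 \le r^2$ as a quadratic in $\mu$, the admissible values form the interval $[\lambda_1,\lambda_2]$ with $\lambda_{1,2} = d\cos\alpha \pm \sqrt{r^2 - d^2 \sin^2\alpha}$, provided the radicand is nonnegative. Since $\lambda \le r$, the problem reduces to proving $\lambda_2 \ge r$.

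All the quantitative work then hinges on two observations. First, $D \ge 2r$ because $B_r(x_c) \subseteq \bK$. Second, with $\theta = 2\arcsin(r/(2D))$ one has $\sin(\theta/2) = r/(2D)$, hence $\cos\theta = 1 - r^2/(2D^2) \ge 7/8$ (so in particular $2\cos^2\theta \ge 1$) and $\sin\theta = 2\sin(\theta/2)\cos(\theta/2) \le r/D$. The bound on $\sin\theta$ yields $d\sin\alpha \le D\sin\theta \le r$, so the radicand above is nonnegative. The inequality $\lambda_2 \ge r$ rewrites as $\sqrt{r^2 - d^2\sin^2\alpha} \ge r - d\cos\alpha$, which is automatic when $r \le d\cos\alpha$; in the remaining case, squaring reduces it to $d \le 2r\cos\alpha$. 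A short case split then finishes the argument: if $d \le 2r\cos\theta$, then $d \le 2r\cos\theta \le 2r\cos\alpha$; otherwise $d\cos\alpha \ge d\cos\theta > 2r\cos^2\theta \ge r$.

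The main obstacle is organizational rather than conceptual: juggling the trigonometric inequalities without losing track of the two cases. The crucial ingredient is $2\cos^2\theta \ge 1$, which rests on the a priori size bound $D \ge 2r$ coming from $B_r(x_c) \subseteq \bK$; once this is in place, every remaining computation is elementary.
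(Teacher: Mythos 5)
Your proof is correct, but it takes a different route from the one the paper sketches (following Wendland). The paper's argument is synthetic: it splits into the cases $x\in B_r(x_c)$ and $x\notin B_r(x_c)$, invoking the separate lemma that balls satisfy the interior cone condition with angle $\pi/3$ in the first case, and in the second case building the cone as the join of $x$ with the cap $S(x,\|x-x_c\|)\cap B_r(x_c)$ and reading the angle off an isosceles triangle via $\|x_c-x\|\sin(\theta/2)=r/2$, then using $\|x_c-x\|\le\sqrt{D(\bK)}$ to compare angles. You instead give a single unified analytic verification: parametrize a cone point $x+\lambda z$, intersect the ray with $B_r(x_c)$ by solving the quadratic in $\mu$, and show $\lambda_2\ge r$ so that $x+\lambda z$ lies on a segment from $x$ into the ball. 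Your computations check out: $D\ge 2r$ gives $\cos\theta\ge 7/8$ hence $2\cos^2\theta\ge 1$, the bound $\sin\theta\le r/D$ makes the discriminant nonnegative, and the two-case analysis of $\sqrt{r^2-d^2\sin^2\alpha}\ge r-d\cos\alpha$ (reducing to $d\le 2r\cos\alpha$ when the right side is positive) is sound. What your approach buys is self-containedness — no appeal to the ball lemma and no case split on the position of $x$ — at the cost of more bookkeeping; the paper's version buys geometric transparency and, in case (ii), actually produces a cone of the larger radius $\|x-x_c\|$ and larger angle $2\arcsin[r/(2\|x_c-x\|)]$, of which the stated $(\rho,\theta)$ is a uniform weakening.
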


\ignore{
\begin{proof}
For completeness, we review the proof of Proposition \ref{propstar} given in \cite{HW01}.

\smallskip\noindent
(i) When $x\in B_r(x_c)$, it follows from Lemma \ref{lemball} that $\bK$ contains a cone pointed at $x$ with radius $r$ and angle $\pi/3$.
It suffices now to observe that $\pi/3$ is at least the selected angle $\theta=2\arcsin\left[{r\over 2\sqrt{D(\bK)}}\right]$, since $r\le \sqrt{D(\bK)}$.

\smallskip\noindent
(ii) If $x$ is outside the ball $\bB_r(x_c)$, then we consider the convex hull of $x$ and the intersection of the sphere $S(x,\|x-x_c\|)=\left\{ y\in\oR^n: \|y-x\|=\|x_c-x\|\right\}$ with $\bB_r(x_c)$, see Figure \ref{figstar1}. This is a cone and, because $\bK$ is star-shaped with respect to $B_r(x_c)$, it is contained in $\bK$. Its radius is the distance from $x$ to $x_c$. To find its angle $\theta$, we consider a triangle formed by $x,x_c$, and any point $y$ in the intersection of $S(x,\|x-x_c\|)$ and the sphere $S(x_c,r)$. This is an isosceles triangle, since $\|y-x\|=\|x_c-x\|$. The angle is $\theta=\angle x_cxy$; the side opposite this angle has length $r$. A little trigonometry then gives us $\|x_c-x\|\sin(\theta/2)=r/2$. Consequently, we have $\theta=2\arcsin\left[{r\over 2\|x_c-x\|}\right]$. Moreover, since $\|x_c-x\|\le \sqrt{D(\bK)},$ we have $\theta\ge 2\arcsin\left[{r\over 2\sqrt{D(\bK)}}\right]$.

\smallskip\noindent
This finishes the proof.
\qed
\end{proof}

\begin{figure}
\centering
\begin{tikzpicture}[scale=1.5]
\draw (0,0) circle (2cm);
\filldraw [black] (0,0) circle (1pt);
\draw (0.3,0) node {$x_c$};
\draw (0,0) arc (0:40:3.5cm);
\draw (0,0) arc (0:-40:3.5cm);
\filldraw [black] (-3.5,0) circle (1pt);
\filldraw [black] (-0.57,1.916) circle (1pt);
\draw (-3.7,0) node {$x$};
\draw (-0.57,2.2) node {$y$};
\draw (-0.47,1) node {$r$};
\draw  (-3.5,0)--(-0.57,1.916);
\draw  (-3.5,0)--(-0.57,-1.916);
\draw  (0,0)--(-0.57,1.916);
\draw  (-3.5,0)--(0,0);
\draw (-3.3,0) arc (0:33:2mm);
\draw (-3.1,0.12) node {$\theta$};
\end{tikzpicture}
\caption[starshaped]{Bounded star-shaped\index{Star-shaped set} sets satisfy the interior cone condition\index{Interior cone condition}.}
\label{figstar1}
\end{figure}
}

\medskip
\noindent
{In fact, any set satisfying the interior cone condition also satisfies the following stronger version of Assumption \ref{propeta}.}

\begin{assumption}\label{ass2}
{There exist constants $\eta_{\mathbf{K}}>0$ and ${\epsilon}_{\mathbf{K}}>0$ such that, for all points $a\in \mathbf K$,}
\begin{equation}\label{volballn2}
\vol (B_\epsilon(a)\cap \mathbf{K}) \ge {\eta_{\mathbf{K}}}\vol B_\epsilon(a)={\eta_{\mathbf{K}}}\epsilon^n\gamma_n \ \ \text{for all $0<\epsilon\le{\epsilon}_{\mathbf{K}}$}.
\end{equation}
\end{assumption}

\smallskip\noindent
{Hence the only difference with Assumption \ref{propeta} is that  the constants $\eta_{\mathbf K}$ and $\epsilon_{\mathbf K}$ now depend only on the set $\mathbf K$ and not on the choice of $a\in \mathbf K$. Clearly, Assumption \ref{ass2} implies Assumption \ref{propeta}. Moreover, any set satisfying the interior cone condition satisfies Assumption \ref{ass2}.}

\begin{lemma}\label{claimiccass}
If a  set $\mathbf{K}\subseteq \oR^n$ satisfies the  interior cone condition (\ref{eqcxp}) then $\bK$ also satisfies Assumption \ref{ass2} (and thus Assumption \ref{propeta}), where we set
\begin{equation*}\label{etakepsilonk}
\eta_{\mathbf{K}}=\left[ \sin{\theta}\over 1+\sin{\theta} \right]^n\ \ \text{and} \ \ \epsilon_{\mathbf{K}}=\rho.
\end{equation*}
\end{lemma}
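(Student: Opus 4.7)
The plan is to fix an arbitrary point $a\in\mathbf{K}$, invoke the interior cone condition to obtain a cone $C(a,\xi(a),\theta,\rho)\subseteq\mathbf{K}$, and then exhibit a large inscribed Euclidean ball inside the portion of this cone that already sits inside $B_\epsilon(a)$. Since the desired constants $\eta_{\mathbf{K}}$ and $\epsilon_{\mathbf{K}}$ depend only on the cone parameters $\theta$ and $\rho$, this will yield Assumption \ref{ass2} (which is formally stronger than Assumption \ref{propeta}).

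First I would observe that for any $0<\epsilon\le\rho$, one has the inclusion $C(a,\xi(a),\theta,\epsilon)\subseteq C(a,\xi(a),\theta,\rho)\cap B_\epsilon(a)\subseteq \mathbf{K}\cap B_\epsilon(a)$, because the definition \eqref{eqcxp} forces $\|a+\lambda y - a\|=\lambda\le\epsilon$. So it suffices to lower bound the volume of the truncated cone $C(a,\xi(a),\theta,\epsilon)$.

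Next, I would look for the largest Euclidean ball contained in $C(a,\xi(a),\theta,\epsilon)$, placing its center on the axis at $c=a+t\,\xi(a)$ and calling its radius $r$. Two geometric conditions are needed: the ball should not stick out of the lateral (angular) boundary, which gives $r\le t\sin\theta$; and it should not exceed the spherical boundary at distance $\epsilon$ from $a$, which gives $t+r\le\epsilon$. Equating both constraints and solving yields $t=\epsilon/(1+\sin\theta)$ and
\[
r=\frac{\epsilon\sin\theta}{1+\sin\theta}.
\]

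Therefore $B_r(c)\subseteq C(a,\xi(a),\theta,\epsilon)\subseteq\mathbf{K}\cap B_\epsilon(a)$, and taking volumes gives
\[
\vol\bigl(\mathbf{K}\cap B_\epsilon(a)\bigr)\ge r^n\gamma_n=\left[\frac{\sin\theta}{1+\sin\theta}\right]^n\epsilon^n\gamma_n,
\]
which establishes \eqref{volballn2} with the claimed $\eta_{\mathbf{K}}$ and $\epsilon_{\mathbf{K}}=\rho$. The only nonroutine step is the inscribed-ball computation, and even this is just elementary plane geometry once one pictures the cross-section through the axis; so the real content of the lemma is simply the observation that the inscribed ball radius scales linearly with $\epsilon$ and that the proportionality constant depends only on $\theta$.
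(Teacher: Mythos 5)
Your proof is correct and follows essentially the same route as the paper: both arguments lower bound $\vol(B_\epsilon(a)\cap \mathbf K)$ for $0<\epsilon\le\rho$ by the volume of a ball of radius $\epsilon\sin\theta/(1+\sin\theta)$ inscribed in the truncated cone $C(a,\xi(a),\theta,\epsilon)$, centered on the axis at distance $\epsilon/(1+\sin\theta)$ from $a$. The only difference is that the paper imports this inscribed-ball fact from \cite[Lemma 3.7]{HW01}, whereas you verify it directly via the elementary two-constraint computation, which is a perfectly valid substitute.
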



\begin{proof} 
Assume  that $\mathbf{K}$ satisfies the interior cone condition (\ref{eqcxp}).
Then, using \cite[Lemma 3.7]{HW01}, we know that,  for every $x\in \bK$ and $h\le \rho/(1+\sin{\theta})$, the closed ball $B_{h\sin{\theta}}(x+h\xi(x))$ is contained in $C(x,\xi(x),\theta,\rho)$ and thus in $ \bK$.
Then,
for all $x_0\in\mathbf{K}$ and $\epsilon\in(0,\rho]$, after setting $h=\epsilon/(1+\sin{\theta})$, one can obtain
\begin{equation*}
{\vol (B_{\epsilon}(x_0)\cap \mathbf{K}) \over \vol B_{\epsilon}(x_0)}\ge {\vol C(x_0,\xi(x_0),\theta,\epsilon) \over \vol B_{\epsilon}(x_0)}\ge {\vol B_{h\sin{\theta}}(x_0+h\xi(x_0)) \over \vol B_{\epsilon}(x_0)}=\left[ \sin{\theta}\over 1+\sin{\theta} \right]^n.
\end{equation*}
Thus, Assumption \ref{ass2} holds after setting  $\eta_{\mathbf{K}}=\left[ \sin{\theta}\over 1+\sin{\theta} \right]^n$ and $\epsilon_\bK=\rho$.
\qed
\end{proof}

\smallskip
\noindent
As any convex body (i.e., full-dimensional convex and compact)  is star-shaped with respect to any ball it contains, the next result follows as a direct application of Proposition \ref{propstar} and Lemma \ref{claimiccass}.

\begin{corollary}\label{corptass}
{Any convex body satisfies the interior cone condition and thus Assumptions \ref{propeta} and \ref{ass2}.}
\end{corollary}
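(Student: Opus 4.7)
The plan is to chain together the three previous results in the section, so the proof is essentially a short reduction rather than a technical argument.

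First, I would unpack the definition: since $\mathbf{K}$ is a convex body, it is full-dimensional, so its interior is non-empty and hence contains some Euclidean ball $B_r(x_c)$ with $r>0$. Next, I would verify that such a $\mathbf{K}$ is star-shaped with respect to $B_r(x_c)$ in the sense of Definition \ref{defstarshaped}. This follows immediately from convexity: for any $x\in\mathbf{K}$, the set $\{x\}\cup B_r(x_c)$ is a subset of the convex set $\mathbf{K}$, so its closed convex hull is also contained in $\mathbf{K}$.

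Having established star-shapedness (together with boundedness, which is part of being a convex body), I would invoke Proposition \ref{propstar} to conclude that $\mathbf{K}$ satisfies an interior cone condition with radius $\rho=r$ and angle $\theta=2\arcsin\bigl[r/(2\sqrt{D(\mathbf{K})})\bigr]$. Finally, applying Lemma \ref{claimiccass} to this interior cone condition yields Assumption \ref{ass2}, with the explicit constants $\eta_{\mathbf{K}}=[\sin\theta/(1+\sin\theta)]^n$ and $\epsilon_{\mathbf{K}}=r$. Since Assumption \ref{ass2} trivially implies Assumption \ref{propeta} (the former is uniform in $a\in\mathbf{K}$, the latter allows $a$-dependence), both assumptions are verified.

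There is no real obstacle here; the only thing worth doing carefully is the star-shaped step, to make sure that the closed convex hull of $\{x\}\cup B_r(x_c)$ really lies in $\mathbf{K}$, and this is immediate from convexity and closedness of $\mathbf{K}$. The actual ``geometric content'' lives in Proposition \ref{propstar} and Lemma \ref{claimiccass}, both already proved, so the corollary reduces to a one-line application of them.
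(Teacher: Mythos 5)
Your argument is exactly the paper's: the authors also observe that a convex body, being full-dimensional, contains a ball and is star-shaped with respect to it by convexity, then chain Proposition \ref{propstar} and Lemma \ref{claimiccass} to obtain Assumption \ref{ass2} and hence Assumption \ref{propeta}. The proposal is correct and matches the paper's (one-line) justification, with the star-shapedness step spelled out a bit more carefully.
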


\smallskip\noindent
{As an illustration we now consider the parameters $\eta_{\mathbf K}$, $\epsilon_{\mathbf K}$, and $r_{\mathbf K}$ (from relation (\ref{addefrK})) when $\mathbf K$ is the hypercube, the simplex and the Euclidean ball.}

\begin{remark}\label{exrK}
{Consider first the case when $\mathbf K$ is the  hypercube\index{Hypercube} $\oQ_n=[0,1]^n$.
By Proposition \ref{propstar}, it satisfies the interior cone condition with radius $\rho=1/2$ and angle $\theta=2\arcsin\left[{1\over 4\sqrt{n}}\right]$.
Hence, Assumption \ref{ass2} holds with $\epsilon_{\mathbf K}=1/2$ and
$\eta_{\mathbf K}= \left({\sqrt{16n-1} \over 8n + \sqrt{16n-1}}\right)^n$ (which is $\sim \left({1\over 2\sqrt n}\right)^n$ for $n$ large).
Moreover, as $D(\mathbf K)=n$, it follows that
$r_{\mathbf K}= 4ne$.}

\smallskip\noindent
{Consider now the case when $\mathbf K$ is the full-dimensional simplex ${\Delta_n}$. By Proposition \ref{propstar},  it  satisfies the interior cone condition with radius
$\rho={1\over n+\sqrt{n}}$ and angle $\theta=2 \arcsin\left[{1\over 2\sqrt{2}(n+\sqrt{n})}\right]$
(since the ball with center $\rho(1,\ldots,1)^T$ and radius $\rho$ is contained in $\widehat \Delta_n$).
Hence Assumption \ref{ass2} holds with $\epsilon_{\mathbf K}={1\over n+\sqrt{n}}$
and $\eta_{\mathbf K}= \left({ \sqrt{8(n+\sqrt n)^2-1} \over 4(n+\sqrt n)^2 +\sqrt{ 8(n+\sqrt n)^2 -1}} \right)^n$ (which is $ \sim \left({1\over \sqrt 2 n}\right)^n$ for $n$ large). As $D(\mathbf K)= 2$, it follows that $r_{\mathbf K}= e(n+\sqrt n)^3$.}

{Finally, for the Euclidean ball $\mathbf K=B_1(0)$, we have $\epsilon_{\mathbf K}=1$,
$\eta_{\mathbf K}=\left({\sqrt 3\over 2+\sqrt 3}\right)^n$ and
$r_{\mathbf K}=\max\{2e,n\}.$
}
\end{remark}

\subsection{Perspectives}\label{secrandom}

The sampling approach of Section \ref{secgenerate} often provides good feasible solutions for the examples in Section \ref{sec:numerial examples},
even for small values of $r$. One may therefore explore using the sampling technique (for small $r$) as a way of generating
starting points for multi-start global optimization algorithms.

Another possibility to enhance computation would be to investigate other sufficient conditions for nonnegativity of $h$ on $\bK$, {more general}  than the sum-of-squares condition
studied here. This may result in a faster rate of convergence than for $\underline{f}^{(r)}_{\mathbf{K}}$.

Finally, understanding the exact rate of convergence of the upper bounds $\underline f^{(r)}_{\mathbf K}$ remains an open problem.
In particular we do not know whether $1/\sqrt r$ is the right rate of convergence.

\section*{Acknowledgements}
We thank Jean Bernard Lasserre for bringing our attention to his work \cite{Las11} and for several valuable suggestions, and Dorota Kurowicka for valuable discussions on multivariate sampling techniques.

\end{document}